\newcommand{\bitem}{\begin{itemize}}
\newcommand{\eitem}{\end{itemize}}
\newcommand{\beq}{\begin{equation}}
\newcommand{\eeq}{\end{equation}}
\newcommand{\ip}[2]{\left\langle#1,#2\right\rangle}
\newcommand{\absip}[2]{| \langle#1,#2\rangle |}
\newcommand{\norm}[1]{\|#1\|}
\newcommand{\cE}{\mathcal{E}}
\newcommand{\cR}{\mathcal{R}}
\newcommand{\cH}{\mathcal{H}}
\def\diag{{\text{\rm diag}}}
\def\epsilon{\varepsilon}
\def\phi{\varphi}
\newcommand{\Z}{\mathbb{Z}}
\newcommand{\N}{\mathbb{N}}
\newcommand{\R}{\mathbb{R}}
\newcommand{\C}{\mathbb{C}}
\newcommand{\intRtwo}{\int \limits_{\R^2}}
\newtheorem{theorem}{Theorem}[section]
\newtheorem{remark}{Remark}[section]
\newtheorem{definition}[theorem]{Definition}
\newtheorem{corollary}[theorem]{Corollary}
\DeclareMathOperator{\suppp}{supp \,}
\DeclareMathOperator*{\argmin}{arg\,min}
\newcommand{\hl}[1]{{\color{black}{#1}}} 
\title{Regularization and Numerical Solution of the Inverse Scattering Problem Using Shearlet Frames}
\author{Gitta Kutyniok\footnotemark[1] \and Volker Mehrmann\footnotemark[1] \and Philipp Petersen\footnotemark[1]}
\begin{document}

\maketitle

\begin{abstract}
Regularization techniques for the numerical solution of inverse scattering problems in two space dimensions are discussed. Assuming that the boundary of a scatterer is its most prominent feature, we exploit as model the class of cartoon-like functions. Since functions in this class are asymptotically optimally sparsely approximated by shearlet frames, we consider shearlets as a means for regularization. We analyze two approaches, namely solvers for the nonlinear problem and for the linearized problem obtained by the Born approximation. As example for the first class we study the acoustic inverse scattering problem, and for the second class, the inverse scattering problem of the Schr\"{o}dinger equation. Whereas our emphasis for the linearized problem is more on the theoretical side due to the standardness of associated
solvers, we provide numerical examples for the nonlinear problem that highlight the effectiveness of our algorithmic approach. 
\end{abstract}
\noindent {\bf Keywords.} Helmholtz equation, Inverse medium scattering, Regularization, Schr\"odinger equation, Shearlets, Sparse approximation.

\noindent {\bf AMS subject classification.} 34L25, 35P25, 42C40, 42C15, 65J22, 65T60, 76B15, 78A46

\renewcommand{\thefootnote}{\fnsymbol{footnote}}

\footnotetext[1]{Department of Mathematics, Technische Universit\"at Berlin, 10623 Berlin, Germany;
\texttt{Email-Addresses: $\{$kutyniok,mehrmann,petersen$\}$@math.tu-berlin.de}.}
\section{Introduction}
\label{sec:introduction}

The scattering problem analyzes how incident waves, radiation, or particles, which are transmitted in a medium, are
scattered at inhomogeneities of this medium. The associated inverse problem aims to determine characteristics of
the inhomogeneities from the asymptotic behavior of such scattered waves. This problem appears in various
flavors in different application areas, e.g. non-destructive testing, ultrasound tomography,
and echolocation. For an overview of the problem and recent developments, we refer to the survey article
\cite{CCMScatteringSurvey2000}.

Various numerical methods have been proposed for the solution of inverse scattering problems.
A very common approach to solve a nonlinear inverse scattering problem are  \emph{fix-point iterations}, which produce a sequence of \emph{linear} inverse scattering problems with solutions that converge, under some suitable assumptions, to a solution of the nonlinear problem. One such
approximation technique is the \emph{Born approximation}, see e.g. \cite{BaoTr2010,MosS}. However, one drawback of this class of approaches is the fact that it requires the solution of a linear inverse scattering problem in every
iteration step, which is typically again an \emph{ill-posed} problem that is hard to solve in the presence of noisy data or data with linearization errors.
On the other hand, the nonlinear problem can be solved directly by an optimization approach, see \cite{ColK}. One particular such technique, see \cite{LKKLpRegularization2013}, tackles the nonlinear problem by minimizing a \emph{Tikhonov functional} with a suitably chosen regularization term. The success of such an approach depends heavily on how properties of the solution are encoded in the
regularization term. This, however, requires typically that a priori knowledge about characteristics of
the solution is available.

We discuss both these approaches and combine them with a sparsity based methodology which makes use of representing the scatterer in a sparse way, as it has been suggested in several other areas of inverse problems.
This methodology is based on the hypothesis that most types of data indeed admit a sparse approximation by
a suitably chosen basis or frame,
see Subsection \ref{subsec:frames},
and today this is a well-accepted paradigm. Generally speaking, knowledge of a sparsifying
basis or frame, appropriately applied, allows precise and stable reconstruction from very few and even noisy measurements. One prominent way to infuse such knowledge is by a regularization term such as in a Tikhonov functional. Indeed, in \cite{LKKLpRegularization2013}, it is assumed that the to-be-detected objects are sparse in the sense of small support, which is then encoded by using an $L^p$-norm for $p$ close to $1$ as regularization term, thereby promoting sparsity.

In this paper, we also aim to utilize sparsity to solve inverse medium scattering problems, but follow a different
path. The key idea of our new approach is to generate a model for a large class of natural structures and an associated representation system, which provides asymptotically optimal sparse approximation of elements of this model class. We use this approach for solving the nonlinear as well as a linearized inverse scattering problem. As problem cases we consider the \emph{acoustic inverse medium scattering problem} and the \emph{inverse scattering problem of the Schr\"{o}dinger equation}.
\subsection{Modeling of the Scatterer}

Typically, a scatterer is a natural structure, which distinguishes itself from the surrounding medium
by a change in density. In the 2D setting, this inhomogeneity can be regarded as a curve with, presumably, certain regularity properties. The interior as well as the exterior of this curve  is usually assumed to be homogeneous.

In the area of imaging sciences, the class of \emph{cartoon-like functions} \cite{DCartoonLikeImages2001} is frequently used as model for images governed by
anisotropic structures such as edges. 
Roughly speaking, a cartoon-like function is a compactly supported function which is a twice continuously differentiable function, apart from a piecewise $C^2$ discontinuity curve, see Definition \ref{def:CartoonLikeFunction} below.
This cartoon-like model is well-suited  for  many inverse scattering problems, where the discontinuity
curve 
models the boundary of a homogeneous domain. In some physical applications, one may debate this regularity of the curve as well as the homogeneity of the domains, but a certain smoothness on small pieces of the boundary seems a realistic scenario.
\subsection{Directional Representation Systems}

Having agreed on a model, one needs a suitably adapted representation system which ideally provides asymptotically optimal sparse approximations of cartoon-like functions in the sense of the decay of the $L^2$-error of best $N$-term approximation. Such a system can then be used for the regularization term of a Tikhonov functional.

The first (directional) representation system which achieved asymptotic optimality were \emph{curvelets} introduced
in \cite{CD2004}. In fact, in \cite{CandD2002CurveletsInIllPosedProblems} curvelets are used to regularize linear
ill-posed problems. This is done under the premise that the solution of the inverse problem exhibits edges, which tend to get smoothed out in a regularization procedure, while curvelets as a system adapted to edges overcomes
this obstacle. However, on the practical side, curvelets suffer from the fact that
often a faithful numerical realization of the associated transform is difficult.

In  \cite{GKL2006} shearlet systems were introduced,  which similarly achieve the desired optimal sparse approximation rate \cite{KLcmptShearSparse2011}, but in addition allow a unified treatment of the continuum and digital realm \cite{shearlab}. As curvelets, shearlets are mainly designed for image processing applications, in
which they are also used for different inverse problems such as separation of morphologically distinct components
\cite{Donoho2010c,KLImageSeparationWaveShear2012}, recovery of missing data \cite{GK15,king2014analysis},
or
reconstruction from the Radon transform \cite{ColEGL2010ShearletsRadon}. Furthermore, in contrast to curvelets, compactly supported shearlet frames for high spatial localization are available \cite{KGLConstrCmptShear2012}, see  \cite{KL2012} for a survey. 

In view of this discussion, shearlet frames seem a good candidate as a regularizer for inverse scattering problems, and in fact this will be key to our approach.

\subsection{Inverse Scattering Problems}
We examine two conceptually different approaches to numerically solve inverse scattering problems. More precisely, we study a method to directly tackle the
nonlinear problem as well as a linearization approach. As problem cases we focus on two types of inverse scattering problems, see e.g. \cite{ColK}, which are
the \emph{acoustic inverse scattering problem}, and the \emph{inverse scattering problem of the Schr\"{o}dinger equation}. For the second, we analyze the
strategy to linearize the inverse scattering problem by means of the Born approximation.

The \emph{acoustic inverse scattering problem} aims to reconstruct a contrast function which encodes the scatterer by
emitting an acoustic wave and measuring the scattered waves. Common application areas are radar, sonar,
and geophysical exploration, see e.g. \cite{ColK} for a survey of applications.

The minimization of a suitable Tikhonov functional is a common approach to directly solve this nonlinear inverse
problem. In 
\cite{LKKLpRegularization2013} a sparsity-based regularization term is introduced which uses the $L^p$-norm with $p$ close to $1$ directly on the function to-be-recovered. This regularization scheme is very successful when the object under consideration has small support.

Following our methodological concept, and assuming that cartoon-like functions are an appropriate model for the
scatterer, we instead choose as regularization term the $\ell_p$-norm of the associated shearlet coefficient sequence with $p$ larger or equal to $1$.
After giving the theoretical background in Section \ref{sec:acoustic}, we present numerical experiments that compare our approach to that of \cite{LKKLpRegularization2013}, see Subsection~\ref{subsec:exp_results}. These examples show convincing results, both in terms of the reconstruction error and the number of iterations. In particular, it is demonstrated that edges of the scatterer are recovered with high accuracy.


The \emph{inverse scattering problem of the Schr\"{o}dinger equation} aims to determine a quantum mechanical scattering potential from measurements of backscattering data.
A prominent method to linearize this inverse scattering problem is by means of the Born approximation.

Modeling the scatterer by cartoon-like functions, shearlets can be used again as a regularizer, provided that the transition from the nonlinear towards the linear problem does not influence the
fact that the solution belongs to the class of cartoon-like functions. It has been shown in \cite{OlaPS2001,Serov2013} that certain singularities of the scatterer can still be found in the
solution of the associated linearized problem.
However, all these results require a global regularity of the scatterer to describe the regularity of the inverse Born approximation.
On the other hand, in the case of cartoon-like functions we have strong local but poor global regularity and therefore the results of \cite{OlaPS2001,Serov2013} can not be applied to our situation. To provide a theoretical basis for the application of shearlet frames, we prove that indeed the Born approximation
to the Schr\"odinger equation gives rise to a scattering problem that exhibits sharp edges in the
solution of the linearized problem. In particular, we show that the cartoon model is almost invariant under the
linearization process, see Theorem \ref{thm:MainSchroedinger} and Corollary \ref{cor:cartoon}. This implies that the sparsity structure of the model stays untouched by the linearization. These results
then provide the theoretical justification that shearlet systems can be used as regularization for the numerical solution of the associated linearized problems.

\subsection{Outline of the Paper}

The paper is organized as follows. The precise definition of shearlet systems, their frame properties, and
their sparse approximation properties for cartoon-like functions are summarized in Section \ref{sec:shearlets}.
Section \ref{sec:acoustic} is devoted to the nonlinear scattering problem. We first describe the direct
and associated inverse problem, followed by the introduction of our new approach to regularize the inverse
scattering problem using the shearlet transform in Subsection \ref{subsec:reg_frames}. In Section \ref{sec:numEx} these methods are then compared numerically to other approaches.

The scattering problem of the Schr\"{o}dinger equation, which will be solved by a linearization, is then introduced and
studied in Section \ref{sec:electro} with Theorem~\ref{thm:MainSchroedinger} being the main result on
local regularity of the inverse Born approximation. Corollary~\ref{cor:cartoon} analyzes the situation
of using the cartoon-like model as scatterer. In Subsection \ref{subsec:Schr_Num} we describe possible numerical algorithms that use Corollary~\ref{cor:cartoon} as well as the effect on real world problems.

\section{Shearlet Systems}
\label{sec:shearlets}

In this section we provide a precise definition of shearlet frames and recall their sparse approximation properties,
see \cite{KL2012} for a survey on shearlets and \cite{Christensen2003a} for a survey on frames.

\subsection{Review of Frame Theory}
\label{subsec:frames}

A \emph{frame} generalizes the notion of orthonormal bases by only requiring a norm equivalence between the Hilbert space norm of a vector and the $\ell_2$-norm of the associated sequence of coefficients. To be more precise, given a Hilbert space $\cH$ and an index set $I$, then a
system $\{\varphi_i\}_{i \in I} \subset \cH$,  is called a \emph{frame} for $\cH$, if there exist constants $0< \alpha_{1} \leq \alpha_2 <\infty$
such that
\[
\alpha_1\|f\|^2 \leq \sum_{i \in I} \absip{f}{\varphi_i}^2 \leq \alpha_2\|f\|^2 \quad \mbox{for all } f \in \cH.
\]
The constants $\alpha_1$, $\alpha_2$ are referred to as the \emph{lower} and \emph{upper frame bound}, respectively. If $\alpha_1=\alpha_2$ is possible, then the frame is called \emph{tight}.

Each frame $\Phi:= \{\varphi_i\}_{i \in I} \subset \cH$ is associated with an \emph{analysis operator} $T_\Phi$ defined by
\[
T_\Phi : \cH \to \ell^2(I), \quad T_\Phi(f) = (\ip{f}{\varphi_i})_{i \in I},
\]
which decomposes a function into its \emph{frame coefficients}. The adjoint of $T_\Phi$ is called \emph{synthesis operator} and is given by
\[
T_\Phi^* : \ell^2(I) \to \cH, \quad T_\Phi^*((c_i)_{i \in I}) = \sum_{i \in I} c_i \varphi_i.
\]
Finally, the \emph{frame operator} is defined by $S_\Phi := T_\Phi^*T_\Phi$. The operator $S_\Phi$, which can be shown to be self-adjoint and invertible, see e.g. \cite{Christensen2003a}, allows both a reconstruction of $f$ (given its frame coefficients) and an expansion of $f$ in terms of the frame elements, i.e.,
\[
f = \sum_{i \in I} \ip{f}{\varphi_i} S_\Phi^{-1} \varphi_i = \sum_{i \in I} \ip{f}{S_\Phi^{-1} \varphi_i} \varphi_i \quad \mbox{for all } f \in \cH.
\]
Hence, although $\Phi$ does not constitute a basis, there exists a reconstruction formula using the system $\{\tilde{\varphi}_i\}_{i \in I}:= \{S_\Phi^{-1} \varphi_i\}_{i \in I}$, which can actually be shown to also form a frame, the \emph{so-called canonical dual frame}.

As for efficient expansions of a function $f \in \cH$ in terms of $\Phi$ we can identify with $(\ip{f}{\tilde{\varphi}_i})_{i \in I}$ one explicit coefficient sequence. However, this is typically by far not the `best' possible coefficient sequence in the sense of rapid decay in absolute value. Since one has better control over the sequence $(\ip{f}{\varphi_i})_{i \in I}$ of frame coefficients, it is often advantageous to instead consider the expansion $f = \sum_{i \in I} c_i \tilde{\varphi}_i$ of $f$ in terms of the canonical dual frame. The reason is that, if fast decay of the frame coefficients can be shown, then this form provides an efficient expansion of $f$.

\subsection{Shearlet systems and Frame Properties}

Shearlet systems are designed to asymptotically optimal encode geometric features in two space dimensions.
For their construction we define for $ j, k \in \Z$ the matrices
\[
A_j = \begin{bmatrix} 2^j & 0 \\ 0 & 2^{\frac{j}{2}} \end{bmatrix}, \quad \text{ and } S_k = \begin{bmatrix} 1 & k \\ 0 & 1 \end{bmatrix}.
\]
The former is called \emph{parabolic scaling matrix} and ensures that the elements of the shearlet system have an essential support of size $2^{-j} \times 2^{-\frac{j}{2}}$ following the
\emph{parabolic scaling law} `\emph{width} $\approx$ \emph{length}$^2$'. The matrix $S_k$ is called \emph{shearing matrix}, which in contrast to rotation matrices used in the construction of curvelets \cite{CD2004}, leave the digital grid $\Z^2$ invariant, and ensure the possibility of a faithful numerical realization.
The formal definition of a shearlet system as it was defined in \cite{KGLConstrCmptShear2012} is as follows.
\begin{definition}\label{def:shearletSystem}
Let $\phi, \psi, \tilde{\psi} \in L^2(\R^2)$, $c= [c_1,c_2]^T \in \R^2$ with $c_1,c_2>0$. Then the \emph{(cone-adapted) shearlet
system} is defined by
\[
 \mathcal{SH}(\phi, \psi, \tilde{\psi}, c) = \Phi(\phi, c_1) \cup \Psi(\psi, c) \cup \tilde{\Psi}(\tilde{\psi}, c),
\]
where
\begin{eqnarray*}
\Phi(\psi, c_1) &=& \left \{ \phi(\cdot - c_1 m) :m\in \Z^2 \right\},\\
\Psi(\psi, c) &=& \left\{ \psi_{j,k,m} = 2^{\frac{3j}{4}}\psi(S_k A_{j}\cdot -  M_c m ): j\in \N_0, |k| \leq   2^{\left\lceil\frac{j}{2}\right\rceil}, m\in \Z^2 \right\},\\
\tilde{\Psi}(\tilde{\psi}, c) &=& \left\{ \tilde{\psi}_{j,k,m} = 2^{\frac{3j}{4}}\tilde{\psi}(S_k^T \tilde{A}_{j}\cdot -  M_{\tilde{c}} m ): j\in \N_0, |k| \leq
2^{\left\lceil\frac{j}{2}\right\rceil}, m\in \Z^2 \right\},
\end{eqnarray*}
with $M_c:= \diag( c_1, c_2 )$, $M_{\tilde{c}} = \diag( c_2 , c_1 )$, and $\tilde{A}_{2^j} = \diag(2^{\frac{j}{2}},2^{j})$.
\end{definition}
One possibility to obtain a frame is to choose $\phi, \psi \in L^2(\R^2)$ such that, for $\alpha>\gamma>3$,
 \begin{eqnarray*}
 |\widehat{\phi}(\xi_1, \xi_2)| &\leq& C_1 \min\{ 1, |\xi_1|^{-\gamma}\} \min \{1, |\xi_2|^{-\gamma}\} \mbox{ and }\\
 |\widehat{\psi}(\xi_1, \xi_2)| &\leq& C_2 \min\{1,|\xi_1|^\alpha\}\min\{1, |\xi_1|^{-\gamma}\} \min \{1, |\xi_2|^{-\gamma}\},
\end{eqnarray*}
where $\widehat{g}$ denotes the Fourier transform of $g\in L^2(\R^2)$, and $\tilde{\psi}(x_1, x_2) := \psi(x_2, x_1)$. In this situation it has been established in \cite{KGLConstrCmptShear2012} that there exists a sampling vector $c= [c_1, c_2]^T\in \R^2$, $c_1,c_2>0$ such that $\mathcal{SH}(\phi, \psi, \tilde{\psi}; c)$
forms a frame for $L^2(\R^2)$. One special case are compactly supported shearlet frames.

%
%
Faithful implementations of shearlet frames and the associated analysis operators
are available at \url{www.ShearLab.org}, see also \cite{shearlab}.

\subsection{Sparse Approximation}
\label{subsec:sparseapprox}
Shearlets have well-analyzed approximation properties, in particular, for cartoon-like functions as initially introduced in \cite{DCartoonLikeImages2001}. Denoting by $\chi_D \in L^2(\R^2)$ the characteristic function on a bounded, measurable set $D \subset \R^2$, we have the following definition.
\begin{definition} \label{def:CartoonLikeFunction}
The class $\mathcal{E}^2(\R^2)$ of \emph{cartoon-like functions} is the set of functions $f:\R^2 \to \C$ of the form
\[
 f = f_0 + f_1 \chi_D,
\]
where $D \subset [0,1]^2$ is a set with $\partial D$ being a closed $C^2$-curve with bounded curvature and $f_i \in C^2(\R^2)$
are functions with support $\suppp f_i \subset [0,1]^2$ as well as $\|f_i\|_{C^2}\leq 1$ for $i = 0,1$.
\end{definition}
%
%
We measure the approximation quality of shearlets with respect to the
cartoon model by the decay of the $L^2$-error of best $N$-term approximation. Recall that for a general representation system
$\{\psi_i\}_{i\in I} \subset \mathcal{H}$ and $f\in \mathcal{H}$, the \emph{best $N$-term approximation} is defined as
\[
f_N = \argmin \limits_{\substack{\Lambda \subset \N, |\Lambda| = N,\\ \tilde{f}_N = \sum \limits_{i \in \Lambda} c_i \psi_i}} \|f- \tilde{f}_N \|.
\]
In contrast to the situation of orthonormal bases, if $\{\psi_i\}_{i\in I}$ forms a frame or even a tight frame, it is not clear at all how the set $\Lambda$ has to be chosen. Therefore, often the \emph{best $N$-term approximation} is substituted by the $N$-term approximation using the $N$ largest coefficients.

To be able to claim \emph{asymptotic optimality} of a sparse approximation, one requires a benchmark result. In
\cite{DCartoonLikeImages2001} it was shown that for an arbitrary representation system $\{\psi_i\}_{i\in I} \subset L^2(\R^2)$, the minimally achievable asymptotic approximation error for $f \in \mathcal{E}^2(\R^2)$ is
\[
\|f-f_N\|_2^2 = O(N^{-2}) \quad \mbox{as } N \to \infty,
\]
provided that only polynomial depth search is used to compute the approximation. The condition on the polynomial depth search means, that the $i$-th term in the expansion can be chosen in accordance with a selection rule
$\sigma(i,f)$, which obeys $\sigma(i,f) \leq \pi(i)$
for a fixed polynomial $\pi(i)$, see also \cite{DCartoonLikeImages2001}.

In the above definition of asymptotic optimality, we made use of the  Landau symbol $O(f(a))$, which for a function $f$  describes the asymptotic convergence behavior as $a \to 0$ for the set of functions $g$ such that $\limsup_{x\to a}$ $\frac{g(x)}{f(x)} < \infty$.

Shearlets achieve this asymptotically optimal rate up to a $\log$-factor as the following result shows.
\begin{theorem}[\cite{KLcmptShearSparse2011}]
\label{thm:ShearletOptimallySparseApproximation}
Let $\phi, \psi, \tilde{\psi} \subset L^2(\R^2)$ be compactly supported, and assume that the shearlet system
$\mathcal{SH}(\phi, \psi, \tilde{\psi}, c)$ forms a frame for $L^2(\R^2)$. Furthermore, assume that, for all $\xi =
[\xi_1, \xi_2]^T \in \R^2$, the function $\psi$ satisfies
\begin{eqnarray*}
|\widehat{\psi}(\xi)| &\leq& C \min\{1, |\xi_1|^{\delta}\} \min \{1, |\xi_1|^{-\gamma}\} \min \{1, |\xi_2|^{-\gamma}\},\\
\left |\frac{\partial }{\partial \xi_2}\widehat{\psi}(\xi)\right | &\leq& |h(\xi_1)| \left(1+\frac{|\xi_2|}{|\xi_1|} \right)^{-\gamma},
\end{eqnarray*}
where $\delta >6$, $\gamma \geq  3$, $h\in L^1(\R)$ and $C$ is a constant, and $\tilde{\psi}$ satisfies analogous conditions
with the roles of $\xi_1$ and $\xi_2$ exchanged. Then $\mathcal{SH}(\phi, \psi, \tilde{\psi}, c)$ provides an asymptotically optimal sparse approximation
of $f \in \cE^2(\R^2)$, i.e.,
\[
\norm{f - f_N}_2^2 =O(N^{-2} \cdot (\log N)^3)\quad \mbox{as } N \to \infty.
\]
\end{theorem}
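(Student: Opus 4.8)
The plan is to follow the now-standard pipeline for proving optimal sparse approximation of cartoon-like functions by directional systems, adapted to compactly supported shearlets. First I would reduce to a model situation: since $f = f_0 + f_1\chi_D$ with $f_0, f_1 \in C^2$, the smooth part $f_0$ contributes only an $\ell_2$-summable (in fact rapidly decaying after sorting) shearlet coefficient sequence because shearlets have vanishing moments and $C^2$ functions are already efficiently captured; so the difficulty is entirely localized at the discontinuity curve $\partial D$. By a partition of unity and an affine change of coordinates, I would localize the analysis to a small box around a point of $\partial D$, where the curve is either well-separated from the box (smooth case, handled as for $f_0$) or passes through it and, after rescaling, is nearly flat with controlled $C^2$ deviation. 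The weight function $f_1$ can be replaced by a constant on each small box up to a negligible error, so the model reduces to estimating $\langle \chi_{B}, \psi_{j,k,m}\rangle$ where $B$ is a half-space (or half-space with a parabolic correction) intersected with the box.

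Next I would classify the shearlet coefficients $\langle f, \psi_{j,k,m}\rangle$ at scale $j$ according to the geometric relationship between the essential support of $\psi_{j,k,m}$ — a box of size roughly $2^{-j}\times 2^{-j/2}$ sheared by $k$ — and the curve $\partial D$. The three regimes are: (i) the shearlet support does not meet the curve, (ii) the support meets the curve but the shearing parameter $k$ is far from the slope of the tangent at that point, and (iii) the support meets the curve and $k$ matches the tangent direction. For (i) I would use the smoothness of $f$ on that region together with the vanishing-moment/frequency-decay hypotheses on $\widehat\psi$ (the bounds $|\widehat\psi(\xi)|\le C\min\{1,|\xi_1|^\delta\}\cdots$ with $\delta>6$, $\gamma\ge 3$) to get decay like $2^{-j\delta/2}$ or faster, times a count of at most $O(2^{j/2})$ such terms per box. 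For (ii) the key is that a shearlet whose orientation disagrees with the edge still only overlaps the edge on a tiny sliver; integrating by parts in the direction along the shearlet (where $\widehat\psi$ decays, using the second hypothesis $|\partial_{\xi_2}\widehat\psi(\xi)|\le |h(\xi_1)|(1+|\xi_2|/|\xi_1|)^{-\gamma}$) gives extra decay in $|k - (\text{slope})|$, and summing the geometric-type series over $|k|\le 2^{\lceil j/2\rceil}$ keeps this regime under control. For (iii) only $O(1)$ shearing indices are relevant at each of the $O(2^{j/2})$ positions $m$ along the curve within the box, and each such coefficient is of size at most $O(2^{-3j/4})$ by the normalization and the size of the overlap region; there are thus $O(2^{j/2})$ coefficients of size $O(2^{-3j/4})$ at scale $j$.

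With the per-scale bookkeeping in hand I would assemble the best $N$-term (equivalently, $N$-largest-coefficient) estimate. The dominant contribution comes from regime (iii): at scale $j$ there are $\sim 2^{j/2}$ coefficients of magnitude $\sim 2^{-3j/4}$, so the $n$-th largest coefficient (after accounting for all scales $\le j$, giving $\sim 2^{j/2}$ total) behaves like $n^{-3/2}$, whence the tail sum of squares of coefficients beyond the top $N$ is $\sum_{n>N} n^{-3} \asymp N^{-2}$; the $\log$ factor $(\log N)^3$ enters because the frame is redundant and compactly supported — one cannot diagonalize, so one pays a logarithmic price both in converting the coefficient decay into an $L^2$ approximation-error bound via the frame (dual-frame) expansion and in the cruder counting forced by working in space rather than in a clean tiling of frequency. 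Here one uses the frame property from Section~\ref{sec:shearlets} (Definition~\ref{def:shearletSystem} and the frame construction recalled afterwards) so that $\|f - f_N\|_2^2$ is controlled, up to the upper frame bound, by the tail $\ell_2$-mass of the sorted coefficients, and the polynomial-depth-search requirement is met because selecting the $N$ largest among the (polynomially many, per relevant scale) candidates is a legitimate selection rule.

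The main obstacle, and the part requiring the most care, is regime (ii): controlling the sum over \emph{mismatched} shearing parameters $k$. One must show that the repeated integration by parts (or equivalently, the decay of $\widehat\psi$ transverse to its orientation) produces a bound that is summable in $k$ \emph{uniformly across scales}, and that the constants do not blow up when the curve $\partial D$ is only $C^2$ rather than smoother — this is exactly where the curvature bound and the precise exponents $\delta > 6$, $\gamma \ge 3$, and the $L^1$-condition on $h$ are consumed. A secondary technical nuisance is that, unlike the band-limited (Pseudo)-shearlet constructions, the compactly supported $\psi$ is not frequency-localized, so the frequency-side estimates must be combined with spatial localization arguments (the essential support is only \emph{essential}, with fast off-support decay), forcing a more delicate splitting; I would handle this by the standard device of estimating $\langle f,\psi_{j,k,m}\rangle$ through $\widehat f \cdot \overline{\widehat{\psi_{j,k,m}}}$ on a Fourier wedge and bounding the complementary error by the rapid decay hypotheses. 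Once these two points are secured, the rest is the routine summation sketched above, and the $(\log N)^3$ is the honest (and known to be essentially unavoidable for this construction) overhead.
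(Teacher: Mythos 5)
A point of order first: the paper does not prove Theorem~\ref{thm:ShearletOptimallySparseApproximation} at all --- it is imported verbatim from \cite{KLcmptShearSparse2011} --- so there is no in-paper proof to compare against; your proposal has to be judged against the argument in that reference. At the level of architecture your outline matches it: reduce the problem to the discontinuity curve via a partition into dyadic squares of side length $2^{-j/2}$ at scale $j$, classify the coefficients according to whether the sheared, parabolically scaled support meets $\partial D$ and whether the shear $k$ matches the local tangent slope, establish weak-$\ell^{2/3}$-type decay of the sorted coefficient sequence from the three regimes, and convert the coefficient tail into the $N$-term error through the expansion in the canonical dual frame, at the cost of the constant $1/\alpha_1$ only.

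Two of your explanatory claims are, however, wrong, and the proposal is in any case a roadmap rather than a proof. First, the factor $(\log N)^3$ has nothing to do with redundancy or with compact support of the generators: it is already present for band-limited tight curvelet and shearlet frames \cite{CD2004}. It arises from the scale count: each scale $j$ contributes $O(\epsilon^{-2/3})$ edge-interacting coefficients of modulus exceeding $\epsilon$, uniformly in $j$, and about $\log_2(\epsilon^{-1})$ scales are relevant for a given threshold, whence $\#\{n:|c_n|>\epsilon\}\lesssim \epsilon^{-2/3}\log(\epsilon^{-1})$, $|c_{(n)}|\lesssim n^{-3/2}(\log n)^{3/2}$, and the cube of the logarithm in the tail of squares; the passage from coefficients to $\|f-f_N\|_2^2$ via the frame costs no logarithm. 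Second, for compactly supported generators the spatial support is exact and it is the \emph{frequency} support that is only essential; accordingly, the proof in \cite{KLcmptShearSparse2011} is carried out by spatial-domain estimates, after first converting the hypotheses on $\widehat{\psi}$ (the exponents $\delta>6$, $\gamma\geq 3$ and the $L^1$-bound on $h$) into vanishing moments and pointwise decay of $\psi$ and of its directional derivative. Your proposed detour through $\widehat{f}\cdot\overline{\widehat{\psi_{j,k,m}}}$ restricted to Fourier wedges is the band-limited argument and does not transfer directly. Finally, the quantitative heart of the matter --- the decay in the shear mismatch in your regime (ii), its summability uniformly in $j$ for a merely $C^2$ curve with bounded curvature, and the precise per-scale counts --- is exactly the content of the cited paper and is only asserted, not established, in your text.
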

Theorem~\ref{thm:ShearletOptimallySparseApproximation} indicates that shearlet systems provide a very good model for encoding the governing features of a scatterer.

\section{The Nonlinear Acoustic Scattering Problem} \label{sec:acoustic}
In this section we focus on the first of the two numerical approaches, which is to directly tackle the nonlinear problem. As a case study, we consider the
acoustic scattering problem. After briefly discussing the direct problem, we introduce the related inverse problem, which we approach
using a Tikhonov type functional with regularization by $\ell_p$ minimization, with $p$ close to or equal to $1$, applied to the shearlet coefficients.
For this setting, we will provide numerical examples and compare with other regularization approaches.
\subsection{The Direct Problem}
\label{subsec:directproblem}

A very common model for the behavior of an acoustic wave $u : \R^2 \to \C$ in an inhomogeneous medium is the Helmholtz equation \cite{ColK}. Given a \emph{wave number}  $k_0>0$  and a compactly supported \emph{contrast function} $f \in L^2(\R^2)$, the Helmholtz equation has the form
\begin{equation}
\Delta u + k_0^2 (1-f) u  = 0, \label{eq:Helmholtz}
\end{equation}
where the contrast function $f$ models the inhomogeneity of the medium due the scatterer. In a typical situation one models $f$ as a function which is smooth, apart from a model of the scatterer which is again assumed to be an essentially homogeneous medium, whose density is, however, significantly different from the surrounding medium. To model the boundary of the scatterer, a typical approach is to use a curve with a particular regularity, say $C^2$. Recalling the definition of a cartoon-like function in Definition \ref{def:CartoonLikeFunction}, we suggest to use $\mathcal{E}^2(\R^2)$ as a model for the boundary.
Furthermore, even if the boundary of the scatterer is only a piecewise $C^2$ curve, then
 the shearlets provide a very good model, since the sparse approximation results of shearlets as well as our analysis also hold in this more general situation.
As further ingredient for the acoustic scattering problem, we introduce
\emph{incident waves} $u^{\textrm{inc}}$, which are solutions to the \emph{homogeneous Helmholtz equation}, i.e., \eqref{eq:Helmholtz} with $f \equiv 0$. A large class of such solutions take the form
\beq \label{eq:solution_ud}
u^{\textrm{inc}}_d : \R^2 \to \C, \quad u^{\textrm{inc}}_d(x) = e^{i k_0 \ip{x}{d}}
\eeq
for some direction $d \in \mathbb{S}^1$. Then, for a given $f \in L^2(\R^2)$ and  a solution $u^{\textrm{inc}}$ to the homogeneous Helmholtz equation, every solution to \eqref{eq:Helmholtz} can be expressed as \hl{$u = u^{s} + u^{\textrm{inc}}$}, where $u^{s}$ denotes the \emph{scattered wave}. To obtain physically reasonable solutions we stipulate that the scattered wave obeys the \emph{Sommerfeld radiation condition}, see e.g. \cite{ColK},
\[
\frac{\partial u^s}{\partial |x|} = i k_0 u^s(x) + O(|x|^{-\frac{1}{2}}) \text{ for } |x| \to \infty.
\]
For a given $k_0>0$, and contrast function $f\in L^2(\R^2)$ with compact support and incident wave $u^{\textrm{inc}}$, the \emph{acoustic scattering problem} then is to find $u \in H^2_{loc}(\R^2)$ such that
\begin{eqnarray} \nonumber
 \Delta u + k_0^2 (1-f) u  &=& 0,\\ \nonumber
 u &=& u^s + u^{\textrm{inc}},\\ \nonumber
\frac{\partial u^s}{\partial |x|} &=& i k_0 u^s(x) + O(|x|^{-\frac{1}{2}}).
\end{eqnarray}
To obtain an equivalent formulation, we introduce the \emph{fundamental solution $G_{k_0}$ to the Helmholtz equation}, %
\begin{equation} \label{eq:defi_G}
G_{t}(x,y) = \frac{i}{4}H_0^{(1)}(t|x-y|), \quad t > 0, x,y \in \R^2,
\end{equation}
where $H_0^{(1)}$ is a \emph{Hankel function}, see e.g. \cite{abramowitz+stegun}. Let $B_R$ denote the open ball of radius $R > 0$ centered at $0$ and let $R$ be chosen such that $\suppp f \subset B_R$, then the \emph{volume potential} is
defined by
\[
 V(f)(x): = \int \limits_{B_R} G_{k_0}(x,y) f(y)\, dy, \quad x\in \R^2.
\]
Using this potential we can reformulate the acoustic scattering problem as the solution of the \emph{Lippmann-Schwinger integral equation} given by
\begin{equation}
 u^s(x) = - k_0^2 V(f (u^s + u^{\textrm{inc}})) \text{ in } B_R \label{eq:LippmannSchwinger}
\end{equation}
for $f\in L^2(\R^2)$ with $\suppp f \subset B_R$. Any solution $u^s \in H^2_{loc}(B_R)$ of \eqref{eq:LippmannSchwinger} indeed solves the acoustic scattering problem in $B_R$ and can, by the unique continuation principle \cite{JerK}, be uniquely extended to a global solution of the acoustic scattering problem, see e.g. \cite{ColK}.

Letting $L^2(B_R)$ denote the square-integrable functions defined on $B_R$, which are in particular compactly
supported, we now define the \emph{solution operator} of the acoustic scattering problem by
\[
\mathcal{S} : L^2(B_R) \times L^2(B_R) \to H^2_{loc}(B_R), \quad \mathcal{S}(f, u^{\textrm{inc}}) = u,
\]
%
and the Lippmann-Schwinger
equation \eqref{eq:LippmannSchwinger} allows to compute this operator for a given scatterer $f$ and
incident wave $u^{\textrm{inc}}$.

\subsection{The Inverse Problem}

In  the associated inverse problem,  we assume that we know
the incident wave $u^{\textrm{inc}}$ as well as measurements of the scattered wave $u^{s}$ and we aim to compute
information about the scatterer $f$. Following \cite{LKKLpRegularization2013}, we model these measurements as $u^s|_{\Gamma_{\mathrm{meas}}}$,
where $\Gamma_{\mathrm{meas}}$ is the trace of a closed locally Lipschitz continuous curve with $\Gamma_{\mathrm{meas}} \cap \overline{B_R} = \emptyset$.

In the case that we just have one incident
wave $u^{\textrm{inc}}$, then the map $(f, u^{\textrm{inc}})\mapsto u^s|_{\Gamma_{\mathrm{meas}}}$, is called
\emph{mono-static contrast-to-measurement operator} in \cite{LKKLpRegularization2013}. For multiple incident waves, and multi-static measurements, a closed set $\Gamma_{\textrm{inc}}$ is introduced, which is again the trace of a closed locally Lipschitz curve  enclosing $B_R$, such that $\Gamma_{\textrm{inc}} \cap \overline{B_R}= \emptyset$. The set $\Gamma_{\textrm{inc}}$ serves to construct \emph{single layer potentials}, which take the role of
the incident waves. For $\phi \in L^2(\Gamma_{\textrm{inc}})$, these single layer potentials are
\[
 SL_{\Gamma_{\textrm{inc}}}\phi := \int \limits_{\Gamma_{\textrm{inc}}} G_{k_0}(\cdot, y) \phi(y)\, dy \in L^2(B_R),
\]
see Figure \ref{fig:acousticmodel} for an illustration.
\begin{figure}[ht]
\centering
\includegraphics[height=4.5cm]{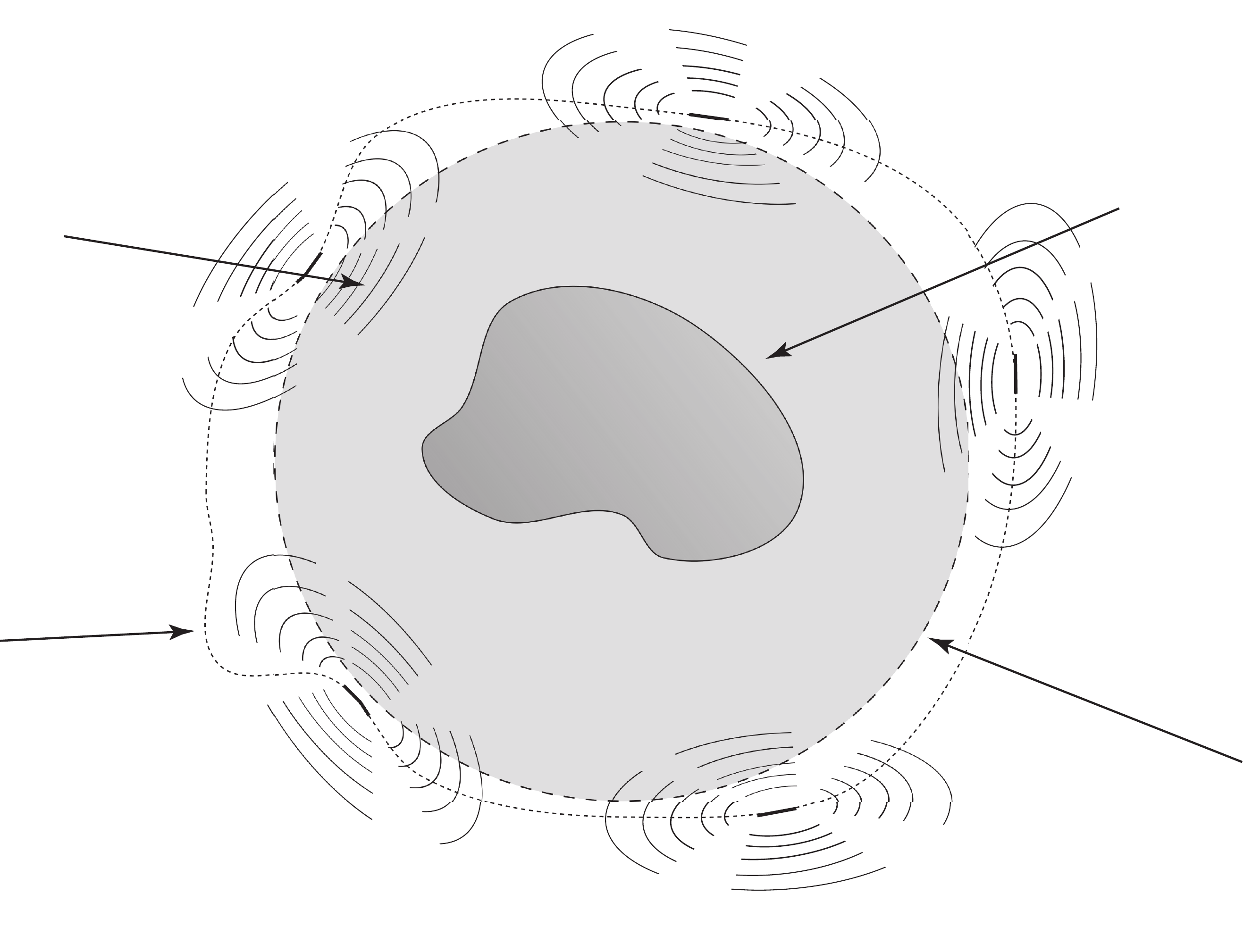}
\put(3,20){{\small $B_R$}}
\put(-12,99){{\small Scatterer modeled by a}}
\put(-12,87){{\small cartoon-like function}}
\put(-225,38){{\small $\Gamma_{\mathrm{meas}} = \Gamma_{\textrm{inc}}$}}
\put(-200,100){{\small $SL_{\Gamma_{\textrm{inc}}}\phi = u^{\textrm{inc}}$}}
\caption{Model for the acoustic inverse scattering problem in which $\Gamma_{\mathrm{meas}} = \Gamma_{\textrm{inc}}$.}
\label{fig:acousticmodel}
\end{figure}
Let $L^p_{\mathrm{Im}\geq 0}(B_R)$
denote the set of $L^p(B_R)$-functions with nonnegative imaginary part, and let $\mathrm{HS}(\cdot, \cdot)$ denote the space of Hilbert Schmidt operators \cite{LinearOperators1980}. Then the \emph{multi-static measurement operator} $\mathcal{N}$, which
assigns to each contrast function a Hilbert-Schmidt operator, maps a single layer potential to the associated
solution of the acoustic scattering problem. Formally, $\mathcal{N}$ is defined by
\[
\mathcal{N}: \ L^2_{\textrm{Im}\geq 0} (B_R) \to \mathrm{HS}(L^2(\Gamma_{\mathrm{inc}}), L^2(\Gamma_{\mathrm{meas}})),
\quad f \mapsto N_f,
\]
where
\[
N_f: L^2(\Gamma_{\mathrm{inc}}) \to L^2(\Gamma_{\mathrm{meas}}), \quad \phi \mapsto \mathcal{S}(f, SL_{\Gamma_{\textrm{inc}}}\phi)_{|\Gamma_{\mathrm{meas}}}.
\]
Note that indeed $N_f \in \mathrm{HS}(L^2(\Gamma_{\mathrm{inc}}), L^2(\Gamma_{\mathrm{meas}}))$, see \cite{LKKLpRegularization2013}, where it was also shown, even in a more general setting, that the operator $\mathcal{N}$ satisfies the following properties.
%
\begin{theorem}\cite{LKKLpRegularization2013}
The operator $\mathcal{N}$ is continuous, compact, and weakly sequentially closed from $L^2_{\mathrm{Im}\geq 0}(B_R)$ into $\mathrm{HS}(L^2(\Gamma_{\mathrm{inc}}), L^2(\Gamma_{\mathrm{meas}}))$.
\end{theorem}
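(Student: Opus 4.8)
The plan is to reduce the statement to the solvability and continuous dependence of the Lippmann--Schwinger equation \eqref{eq:LippmannSchwinger} on $f$, combined with the one structural fact that makes the measurement operator smoothing: the kernel $G_{k_0}(x,y)$ is real-analytic and bounded together with all its derivatives whenever $x$ and $y$ stay at a fixed positive distance, which holds for $x\in\Gamma_{\mathrm{meas}}\cup\Gamma_{\mathrm{inc}}$ and $y\in\overline{B_R}$ by construction. First I would record the mapping properties of the volume potential: by standard estimates for the two-dimensional Helmholtz kernel (see \cite{ColK}), $V$ maps $L^2(B_R)$ boundedly into $H^2(B_R)$, and since $H^2(B_R)\hookrightarrow C(\overline{B_R})$ compactly in two dimensions, the operator $u\mapsto k_0^2 V(fu)$ is compact on $C(\overline{B_R})$ for every $f\in L^2(B_R)$. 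For $f\in L^2_{\mathrm{Im}\geq 0}(B_R)$ the classical uniqueness theorem for the scattering problem --- proved from the Sommerfeld radiation condition, Rellich's lemma and unique continuation \cite{ColK,JerK}, and this is exactly where the sign condition on $\mathrm{Im}\,f$ enters --- shows that $I+k_0^2 V(f\,\cdot\,)$ is injective, hence, being a compact perturbation of the identity, boundedly invertible on $C(\overline{B_R})$; call its inverse $\mathcal{R}_f$. Then $\mathcal{S}(f,u^{\mathrm{inc}})|_{B_R}=\mathcal{R}_f(u^{\mathrm{inc}}|_{B_R})$, and, using the Lippmann--Schwinger representation of $u^s$ on all of $\R^2$, the measurement decomposes as $N_f=E-k_0^2\,M\,W_f$, with $E\phi=(SL_{\Gamma_{\mathrm{inc}}}\phi)|_{\Gamma_{\mathrm{meas}}}$ independent of $f$, $M$ the integral operator with kernel $G_{k_0}$ from $L^2(B_R)$ to $L^2(\Gamma_{\mathrm{meas}})$, and $W_f\phi=f\cdot\mathcal{R}_f(SL_{\Gamma_{\mathrm{inc}}}\phi)$.

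The key lemma I would establish next is a uniform resolvent bound: if $\{f_n\}\subset L^2_{\mathrm{Im}\geq 0}(B_R)$ is bounded, then $\sup_n\|\mathcal{R}_{f_n}\|<\infty$, and moreover $\mathcal{R}_{f_n}w\to\mathcal{R}_f w$ in $C(\overline{B_R})$ for every fixed $w$ whenever $f_n\rightharpoonup f$ weakly. Both follow from the same compactness scheme. Suppose $u_n\in C(\overline{B_R})$ satisfy $(I+k_0^2 V(f_n\,\cdot\,))u_n=g_n$ with $g_n$ convergent; then $\{f_n u_n\}$ is bounded in $L^2(B_R)$, so $\{V(f_n u_n)\}$ is precompact in $C(\overline{B_R})$, hence so is $\{u_n\}$; passing to a subsequence, $u_n\to u_*$ in $C(\overline{B_R})$, $f_n\rightharpoonup f$ in $L^2(B_R)$, and a weak--strong pairing gives $f_n u_n\rightharpoonup f u_*$ in $L^2(B_R)$, so by compactness of $V$ one passes to the limit to get $(I+k_0^2 V(f\,\cdot\,))u_*=\lim g_n$. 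Taking $\|u_n\|_\infty=1$ and $g_n\to 0$ contradicts injectivity of $I+k_0^2 V(f\,\cdot\,)$ (note the weak limit $f$ again lies in $L^2_{\mathrm{Im}\geq 0}(B_R)$, which is convex and strongly closed, hence weakly closed), which yields the uniform bound; taking $g_n=w$ fixed, the uniform bound provides the required a priori boundedness of $u_n=\mathcal{R}_{f_n}w$, and the limit identity identifies $u_*=\mathcal{R}_f w$, forcing convergence of the whole sequence.

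Granting these, the three assertions are bookkeeping. Because $\Gamma_{\mathrm{meas}}$ and $\Gamma_{\mathrm{inc}}$ have positive distance from $\overline{B_R}$, the kernels of $E$, of $M$, and of $SL_{\Gamma_{\mathrm{inc}}}$ viewed into $H^2(B_R)$ are smooth and bounded, so $E$ and $M$ are Hilbert-Schmidt and $SL_{\Gamma_{\mathrm{inc}}}\colon L^2(\Gamma_{\mathrm{inc}})\to H^2(B_R)$ is Hilbert-Schmidt; factoring $W_f$ as $L^2(\Gamma_{\mathrm{inc}})\xrightarrow{SL_{\Gamma_{\mathrm{inc}}}}H^2(B_R)\xrightarrow{\mathcal{R}_f}H^2(B_R)\hookrightarrow C(\overline{B_R})\xrightarrow{f\cdot}L^2(B_R)$ exhibits $W_f$, hence $N_f$, as Hilbert-Schmidt. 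For compactness, take $\{f_n\}$ bounded and pass to $f_n\rightharpoonup f$; with an orthonormal basis $\{e_j\}$ of $L^2(\Gamma_{\mathrm{inc}})$, a Cauchy-Schwarz estimate in the $y$-variable gives $\|(N_{f_n}-N_f)e_j\|_{L^2(\Gamma_{\mathrm{meas}})}\leq k_0^2\|G_{k_0}\|_{L^2(\Gamma_{\mathrm{meas}}\times B_R)}\,\|f_n v_{n,j}-f v_j\|_{L^2(B_R)}$ with $v_{n,j}=\mathcal{R}_{f_n}(SL_{\Gamma_{\mathrm{inc}}}e_j)$ and $v_j=\mathcal{R}_f(SL_{\Gamma_{\mathrm{inc}}}e_j)$; by the uniform bound the right-hand side is $\leq C\|SL_{\Gamma_{\mathrm{inc}}}e_j\|_{H^2(B_R)}$ with $C$ independent of $n$, so the tails $\sum_{j>J}\|(N_{f_n}-N_f)e_j\|^2$ are small uniformly in $n$ (as $SL_{\Gamma_{\mathrm{inc}}}$ is Hilbert-Schmidt into $H^2(B_R)$), while each fixed term tends to $0$ by the continuous-dependence statement; hence $\|N_{f_n}-N_f\|_{\mathrm{HS}}\to 0$. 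This gives compactness; continuity follows since strong convergence of $f_n$ is a special case and the limit is determined; weak sequential closedness follows because if additionally $N_{f_n}\to A$, then $A=N_f$.

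The main obstacle I anticipate is the uniform resolvent bound of the second paragraph: it must hold along a merely bounded --- a fortiori merely weakly convergent --- sequence of contrasts, which is why the argument has to route through the compactness of the volume potential and the scattering-uniqueness theorem rather than a naive Neumann-series perturbation, and it is the only place where the hypothesis $\mathrm{Im}\,f\geq 0$ is genuinely used. Everything downstream is soft: once $\sup_n\|\mathcal{R}_{f_n}\|<\infty$, the off-diagonal smoothness of $G_{k_0}$ turns all the relevant operators into Hilbert-Schmidt ones with uniformly controlled tails, and compactness, continuity, and weak sequential closedness drop out together. (This recovers, along essentially the lines of \cite{LKKLpRegularization2013}, the stated properties of $\mathcal{N}$.)
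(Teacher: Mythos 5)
The paper does not actually prove this statement --- it is imported verbatim from \cite{LKKLpRegularization2013} --- so there is no in-paper proof to compare against. Your reconstruction follows what is essentially the standard route of that reference: Riesz--Fredholm theory for $I+k_0^2V(f\,\cdot\,)$ on $C(\overline{B_R})$ with injectivity supplied by the forward-uniqueness theorem (this is indeed the only place the sign condition on $\mathrm{Im}\,f$ is used), a uniform resolvent bound along bounded sequences of contrasts obtained by the compactness-and-contradiction scheme, and the off-diagonal smoothness of $G_{k_0}$ to place everything in the Hilbert--Schmidt class. Proving the single implication ``$f_n\rightharpoonup f$ implies $N_{f_n}\to N_f$ in Hilbert--Schmidt norm'' and reading off continuity, compactness and weak sequential closedness from it is the right economy, and the architecture is sound.

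Two repairs are needed. The substantive one is in the compactness paragraph: for fixed $j$ you bound $\|(N_{f_n}-N_f)e_j\|$ by $C\,\|f_nv_{n,j}-fv_j\|_{L^2(B_R)}$ and then assert that this tends to $0$ ``by the continuous-dependence statement''. It does not: writing $f_nv_{n,j}-fv_j=f_n(v_{n,j}-v_j)+(f_n-f)v_j$, the first summand is norm-null by your lemma, but $(f_n-f)v_j$ is only \emph{weakly} null in $L^2(B_R)$ (take $f_n=f+e_n$ for an orthonormal sequence to see that the norm need not decay). The Cauchy--Schwarz bound is the right tool for the tails but the wrong one for the fixed terms; there you must instead use that $(N_{f_n}-N_f)e_j=-k_0^2M\bigl(f_nv_{n,j}-fv_j\bigr)$ with $M$ compact (it is Hilbert--Schmidt, as you show), so that a weakly null argument is mapped to a norm-null image. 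All the ingredients are already in your text; only this deduction has to be rerouted. A smaller point: the kernel of $E$ is \emph{not} bounded when $\Gamma_{\mathrm{meas}}$ and $\Gamma_{\mathrm{inc}}$ intersect --- the paper's own figure takes $\Gamma_{\mathrm{meas}}=\Gamma_{\mathrm{inc}}$ --- since $G_{k_0}$ has a logarithmic diagonal singularity; $E$ is nevertheless Hilbert--Schmidt because $|\log|x-y||^2$ is integrable on $\Gamma\times\Gamma$, and in any case $E$ is independent of $f$, so this does not affect the three asserted properties of $\mathcal{N}$.
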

Since in realistic applications the signals always contain noise, we consider
the \emph{inverse acoustic scattering problem} with noisy data, which
for a noise level $\epsilon > 0$, and noisy measurements $N^{\epsilon}_{\mathrm{meas}} \in \mathrm{HS}(L^2(\Gamma_{\mathrm{inc}}), L^2(\Gamma_{\mathrm{meas}}))$ satisfying
\begin{equation}
\| N^{\epsilon}_{\mathrm{meas}} - N_{f^\dagger} \|_{\mathrm{HS}(L^2(\Gamma_{\textrm{inc}}), L^2(\Gamma_{\mathrm{meas}}))} \leq \epsilon \label{eq:noisyMeasurementsObeyThis},
\end{equation}
is to recover the scatterer $f^\dagger$.
This inverse problem is ill-posed and requires careful regularization which is discussed in the next subsection.

\subsection{Regularization by Frames} \label{subsec:reg_frames}

A classical regularization approach to solve inverse problems is to minimize an appropriate Tikhonov functional. If $X$ is a Hilbert space and $Y$ a Banach spaces and $F:X \to Y$ a possibly nonlinear operator, $\mathcal{R}:X\to \R$ a convex functional, $\alpha>0$ a regularization parameter and $y^\epsilon$ a noisy datum, then the Tikhonov functionals under consideration are of the form:
$$T(x) = \frac{1}{2}\|F(x) - y^\epsilon\|_Y^2 + \alpha \mathcal{R}(x), \text{ for }x\in X.$$
This methodology was introduced by Tikhonov, who used this method to solve linear inverse problems and employed a Sobolev norm as penalty term, \cite{Tikhonov1963}. Later, sparsity promoting regularizations where used, where for instance for $0\leq p\leq 2$, $\mathcal{R}(x)=\sum_i|\left \langle x, \phi_i \right \rangle|^p$, with an orthonormal basis $(\phi_i)$ of $X$, \cite{DauDD,GraHS2008}.
Further extensions of this concept to nonlinear operators $F$ and penalty terms $\mathcal{R} = \sum_i|\left \langle x, \phi_i \right \rangle|^p$ with some frame $(\phi_i)_i$ have been described in various flavors in e.g. \cite{JinM2012,RamT2005,TecB2010}.

In \cite{LKKLpRegularization2013} it was suggested to minimize
\begin{equation}
\tilde{\mathfrak{T}}_\alpha^{\epsilon}(f) : = \frac{1}{2}\left\|N_f - N^{\epsilon}_{\mathrm{meas}}  \right\|^2_{\mathrm{HS}(L^2(\Gamma_{\textrm{inc}}), L^2(\Gamma_{\mathrm{meas}}))} + \frac{\alpha}{p} \|f\|^p_{L^p(B_R)}, \quad  f \in L^p(B_R).  \label{eq:LechleiterFunctional}
\end{equation}
for fixed $p>1$ and $\alpha> 0$.
For $p$ close to $1$, the regularization term in this functional promotes sparsity in the representation of the scatterer. Regularization techniques of this sort have been studied comprehensively, see e.g. \cite{SchGG2009}.

Here we  suggest a different regularization, which exploits that
the scatterer $f$ is modeled as a cartoon-like function $\cE^2(\R^2)$ and that
shearlet systems are used to obtain sparse approximations of the scatterer $f$. Let $\Phi :=
\mathcal{SH}(\phi, \psi, \tilde{\psi}, c)$  be a shearlet frame satisfying the hypotheses of Theorem \ref{thm:ShearletOptimallySparseApproximation}, and let
$T_\Phi$ denote the analysis operator of the shearlet frame $\Phi$, then the proof of Theorem \ref{thm:ShearletOptimallySparseApproximation} yields the decay behavior of the associated shearlet coefficients $T_\Phi(f)$.
It has been shown in \cite{KLcmptShearSparse2011} that  $T_\Phi(f)$ is in $\ell^p$
for every $p>\frac23$.
%

We propose to regularize the acoustic inverse scattering problem by adapting the data
fidelity term appropriately and by imposing a constraint on the $\ell^p$-norm of the coefficient sequence $T_\Phi(f)$. More precisely, for fixed
$1 \le p \le 2$ and $\alpha> 0$, we consider the Tikhonov functional 
\begin{equation}\label{eq:shearMin}
\mathfrak{T}_\alpha^{\epsilon}(f) : = \frac{1}{2}\left\|\mathcal{N}( f ) - N^{\epsilon}_{\mathrm{meas}}
\right\|^2_{\mathrm{HS}(L^2(\Gamma_{\textrm{inc}}), L^2(\Gamma_{\mathrm{meas}}))} + \frac{\alpha}{p} \|T_\Phi(f)\|^p_{\ell^p}, \quad  f \in L^{2}(B_R).
\end{equation}
%
Note that the case of $p=1$ is not excluded in our analysis in contrast to the situation in \cite{LKKLpRegularization2013}.

Having introduced the Tikhonov regularization, we now describe the convergence of the minimization process. For this we are particularly interested in
convergence to a \emph{norm minimizing solution} $f^* \in L^2(B_R)$, i.e.,
\[
 \mathcal{N}(f^*) = N_{f}  \quad \text{and} \quad \|T_\Phi(f^*)\|_p \leq \|T_\Phi(f)\|_p \text{ for all } f \mbox{ such that }
\mathcal{N}(f) = N_{f^\dagger}.
\]
Such convergence properties have been extensively studied. Let us give the following result, which combines results from \cite{JinM2012} directly for our setting.
\begin{theorem}
Let $\Phi$ be a shearlet frame, $\mathcal{N}$ the multistatic measurement operator, $\epsilon >0$, and $1\leq p\leq 2$.
Then the following assertions hold.
\begin{compactenum}
\item For every $\alpha>0$ there exists a minimizer of $\mathfrak{T}_\alpha^{\epsilon}$.
\item If $\alpha(\delta) \to 0$ and $\frac{\delta^2}{\alpha(\delta)}\to 0$ for $\delta \to 0$, then every sequence of minimizers $f^\delta_{\alpha(\delta)}$ has a subsequence, that converges in $L^2(\R^2)$ to a norm minimizing solution.
\end{compactenum}
\end{theorem}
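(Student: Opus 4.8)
The plan is to view this as an application of the general theory of Tikhonov regularization with convex, sparsity-promoting penalties for nonlinear operators, as developed in \cite{JinM2012}. The strategy is to verify the hypotheses of the abstract convergence results there for the concrete choice $X = L^2(B_R)$, $Y = \mathrm{HS}(L^2(\Gamma_{\mathrm{inc}}), L^2(\Gamma_{\mathrm{meas}}))$, $F = \mathcal{N}$, and $\mathcal{R}(f) = \frac{1}{p}\|T_\Phi(f)\|_{\ell^p}^p$, and then to specialize the output of those results to our setting. The two assertions correspond exactly to (i) existence of minimizers of the Tikhonov functional for every fixed $\alpha>0$, and (ii) the regularization property, i.e. subsequential convergence of minimizers to a $\mathcal{R}$-minimizing solution as the noise level and the regularization parameter are coupled appropriately.

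First I would establish the properties of the penalty $\mathcal{R}$. Since $T_\Phi$ is the analysis operator of a frame, it is a bounded linear injection from $L^2(B_R)$ into $\ell^2$ (injectivity because a frame is complete, so $T_\Phi f = 0$ forces $f=0$), and for $1\le p\le 2$ the map $f\mapsto \|T_\Phi f\|_{\ell^p}^p$ (with the convention that it is $+\infty$ when $T_\Phi f\notin\ell^p$) is convex, proper, and weakly lower semicontinuous on $L^2(B_R)$: weak lower semicontinuity follows since weak convergence $f_n\rightharpoonup f$ in $L^2$ implies coordinatewise convergence of $T_\Phi f_n$ to $T_\Phi f$, and then Fatou's lemma applied to the counting measure gives $\|T_\Phi f\|_{\ell^p}^p \le \liminf \|T_\Phi f_n\|_{\ell^p}^p$. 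One also needs that $\mathcal{R}$ is stabilizing in the sense that its sublevel sets $\{f : \mathcal{R}(f)\le c\}$, intersected with $L^2$, are weakly sequentially compact; here this is automatic because such a set is bounded in $L^2$ (as $\|f\|_{L^2}^2 = \|T_\Phi f\|_{\ell^2}^2 / \alpha_1 \cdot (\text{const})$ is controlled; more directly, $\ell^p\hookrightarrow\ell^2$ for $p\le 2$ with $\|T_\Phi f\|_{\ell^2}\le\|T_\Phi f\|_{\ell^p}$, hence $\|f\|_{L^2}^2\le \alpha_1^{-1}\|T_\Phi f\|_{\ell^2}^2 \le \alpha_1^{-1}\mathcal{R}(f)^{2/p}\cdot p^{2/p}$ up to constants) and closed convex bounded sets in a Hilbert space are weakly sequentially compact. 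For assertion (i), I would then combine this with the continuity and weak sequential closedness of $\mathcal{N}$ from the cited theorem of \cite{LKKLpRegularization2013}: take a minimizing sequence for $\mathfrak{T}_\alpha^\epsilon$, note it lies in a sublevel set of $\mathcal{R}$ hence is bounded, extract a weakly convergent subsequence, use weak sequential closedness of $\mathcal{N}$ together with lower semicontinuity of the data term and of $\mathcal{R}$ to pass to the limit, and conclude the limit is a minimizer.

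For assertion (ii), I would invoke the convergence theorem of \cite{JinM2012}: the requirement $\alpha(\delta)\to 0$ keeps the penalty term from dominating, while $\delta^2/\alpha(\delta)\to 0$ ensures the data misfit at the minimizer tends to zero. The standard argument is: fix any $f$ with $\mathcal{N}(f)=N_{f^\dagger}$ (such $f$ exists since the problem is posed with genuine data, e.g.\ $f^\dagger$ itself if it lies in the relevant domain), use minimality to get $\frac{1}{2}\|\mathcal{N}(f^\delta_{\alpha(\delta)}) - N^\epsilon_{\mathrm{meas}}\|^2 + \alpha(\delta)\mathcal{R}(f^\delta_{\alpha(\delta)}) \le \frac{1}{2}\delta^2 + \alpha(\delta)\mathcal{R}(f)$, whence $\mathcal{R}(f^\delta_{\alpha(\delta)})$ stays bounded and the misfit $\to 0$; the bound on $\mathcal{R}$ again yields boundedness in $L^2$, extract a weak limit $f^*$, pass to the limit using weak sequential closedness of $\mathcal{N}$ (so $\mathcal{N}(f^*)=N_{f^\dagger}$) and weak lower semicontinuity of $\mathcal{R}$ (so $\mathcal{R}(f^*)\le\liminf\mathcal{R}(f^\delta_{\alpha(\delta)})\le\mathcal{R}(f)$ for every admissible $f$), identifying $f^*$ as an $\mathcal{R}$-minimizing, hence $\|T_\Phi(\cdot)\|_p$-minimizing, solution. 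I expect the main point requiring care, rather than a genuine obstacle, to be the inclusion $p=1$: one must confirm that the abstract framework of \cite{JinM2012} and the above lower-semicontinuity and coercivity arguments still go through at $p=1$ (they do, since $\ell^1\hookrightarrow\ell^2$ and $\|\cdot\|_{\ell^1}$ is still convex and weakly sequentially lower semicontinuous under coordinatewise limits via Fatou), which is precisely the case excluded in \cite{LKKLpRegularization2013} and the place where using the frame coefficient norm rather than the $L^p$-norm of $f$ directly buys us the extra generality.
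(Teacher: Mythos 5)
The paper itself gives no proof of this theorem: it simply states that the result ``combines results from \cite{JinM2012} directly for our setting.'' Your route---verifying the hypotheses of the abstract nonlinear Tikhonov framework (convexity, weak sequential lower semicontinuity and coercivity of $\mathcal{R}(f)=\frac1p\|T_\Phi f\|_{\ell^p}^p$ via coordinatewise convergence of frame coefficients, Fatou, the embedding $\ell^p\hookrightarrow\ell^2$ and the lower frame bound, together with the continuity, compactness and weak sequential closedness of $\mathcal{N}$ from \cite{LKKLpRegularization2013})---is exactly the intended argument, and those verifications are correct, including the observation that nothing breaks at $p=1$. One small point you omit: $\mathcal{N}$ is only defined on $L^2_{\mathrm{Im}\geq 0}(B_R)$, so you should note that this set is closed and convex, hence weakly closed, so that weak limits of minimizing sequences remain in the domain.

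There is, however, one genuine gap in your treatment of assertion~2: the theorem claims convergence \emph{in} $L^2(\R^2)$, i.e.\ norm convergence, whereas your argument only produces a weakly convergent subsequence. The standard repair is as follows. From the minimality inequality compared against an $\mathcal{R}$-minimizing solution $f^\dagger_{\min}$ one gets $\limsup_\delta \mathcal{R}(f^\delta_{\alpha(\delta)})\leq \mathcal{R}(f^\dagger_{\min})$, and combined with weak lower semicontinuity and the fact that the weak limit $f^*$ is itself a solution, this forces $\mathcal{R}(f^\delta_{\alpha(\delta)})\to\mathcal{R}(f^*)=\mathcal{R}(f^\dagger_{\min})$. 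Weak $L^2$ convergence gives coordinatewise convergence $T_\Phi f^\delta_{\alpha(\delta)}\to T_\Phi f^*$, and coordinatewise convergence together with convergence of the $\ell^p$ norms implies $\|T_\Phi(f^\delta_{\alpha(\delta)}-f^*)\|_{\ell^p}\to 0$ (a Radon--Riesz/Brezis--Lieb type lemma for $\ell^p$, $p<\infty$). Since $\|\cdot\|_{\ell^2}\leq\|\cdot\|_{\ell^p}$ for $p\leq 2$, the lower frame bound then yields $\alpha_1\|f^\delta_{\alpha(\delta)}-f^*\|_{L^2}^2\leq\|T_\Phi(f^\delta_{\alpha(\delta)}-f^*)\|_{\ell^2}^2\to 0$. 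Without this step your proof establishes only weak subsequential convergence, which is strictly weaker than what the theorem asserts.
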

Having introduced the necessary theoretical tools, we can now give numerical examples to compare the regularization by shearlet frames to that of \cite{LKKLpRegularization2013}.
%
%
\subsection{Numerical Methods for the Acoustic Inverse Scattering Problem}
\label{sec:numEx}

In this section, we will analyze numerical approaches to solve the acoustic scattering problem of Section \ref{sec:acoustic}. After discussing an algorithmic realization of our approach \eqref{eq:shearMin}, we briefly present the other numerical methods that we compare with, followed by a detailed description of the numerical experiments. It will turn out that our new method is advantageous to the other methods in the situation that the scatterer is a body consisting of a more or less homogeneous medium, whose density is significantly different from the surrounding medium.

\subsubsection{The New Algorithmic Approach} \label{subsec:algorithmicapproach}

As suggested in Subsection \ref{subsec:reg_frames}, we aim to solve the minimization problem, compare also \eqref{eq:shearMin},
\[
\min_{f \in L^2(B_R)} \left( \frac{1}{2}\left\|\mathcal{N}(f) - N^{\epsilon}_{\mathrm{meas}}
\right\|^2_{\mathrm{HS}(L^2(\Gamma_{\textrm{inc}}), L^2(\Gamma_{\mathrm{meas}}))} + \frac{\alpha}{p} \|T_{\Phi}(f)\|_{\ell^p}^p\right),
\]
where $\Phi$ is a shearlet frame for $L^2(\R^2)$, and $\tilde{\Phi}$ denotes the associated canonical dual frame. Employing the sign function, the mapping
\[
J_q: B_R \to \C, \quad x \mapsto [J_p(q)](x) := |q(x)|^{p-1}\mathrm{sign}(q(x)),
\]
and the operator $\mathcal{S}_{\alpha\mu, p}: = (I+\alpha\mu J_p)^{-1}$, it has been shown in \cite{LKKLpRegularization2013} that the solution via the standard Tikhonov functional \eqref{eq:LechleiterFunctional} can be obtained as the limit of the \emph{Landweber iteration}
\begin{equation} \label{eq:l1reg_solver}
 f_{n+1} = \mathcal{S}_{\alpha\mu_n, p}\left[ f_n - \mu_n [ \mathcal{N}'(f_n)]^*(\mathcal{N}(f_n)-N^{\epsilon}_{\mathrm{meas}})\right]
\quad \mbox{for } (\mu_n)_n \subset \R^+.
\end{equation}

By \cite{NonlinearTikhonovSparsityConstrains}, the solution to \eqref{eq:shearMin} with a frame based
 regularization term can be computed  as a limit of the iteration
\begin{equation}
 f_{n+1} = T_{\tilde{\Phi}^*}\mathcal{S}_{\alpha\mu_n, p}\left[T_{\Phi}( f_n - \mu_n [ \mathcal{N}'(f_n)]^*(\mathcal{N}(f_n)-N^{\epsilon}_{\mathrm{meas}}))\right]
\quad \mbox{for } (\mu_n)_n \subset \R^+,\label{eq:theIteration}
\end{equation}
see  \cite{LKKLpRegularization2013} for an explicit construction of $\mathcal{N}'$ and $[\mathcal{N}']^*$.

Since the $\ell_1$-norm promotes sparsity, in our experiments we will choose $p=1$. In this case,
$\mathcal{S}_{\alpha\mu, 1}$ is the soft-thresholding operator, which for a scalar $\omega$, is defined as
\[
\mathcal{S}_{\alpha, 1}(\omega) = \left\{ \begin{array}{l l l}
                                     \omega-\alpha, \quad &\text{ if } \ \omega &\geq \alpha,\\
                                     0, \quad & \text{ if } |\omega| &< \alpha,\\
                                    \omega + \alpha, \quad &\text{ if } \ -\omega &\leq -\alpha,\\
                                     \end{array}\right.
\]
with element-wise application for sequences.


The general setup of the numerical experiments, whose results will be described in Subsection \ref{subsec:exp_results}, follows that for similar experiments presented in \cite{LKKLpRegularization2013}.
We chose the stepsize $\mu_n$ according to the Barzilai-Borwein rule \cite{BarB1988}, and stop the iteration when
\begin{equation}
\| \mathcal{N}(f_n) - N^{\epsilon}_{\mathrm{meas}}\|_{\mathrm{HS}(L^2(\Gamma_i), L^2(\Gamma_m))} \leq \tau \epsilon, \label{eq:eq:theDiscrPrinciple}
\end{equation}
with  $\tau = 1.6$ and $\epsilon$ being a fixed parameter chosen according to the noise level. For an analysis of this stopping rule see, e.g. \cite{kaltenbacher2008iterative}.

Furthermore, we choose the regularization parameter $\alpha$ so that it determines the optimal value for a low noise level and then decrease it proportionally to the noise level $\epsilon$. Although other parameter choice rules have been proposed, as for instance the Morozov discrepancy principle \cite{AnzR2010}, we prefer this simpler a-priori choice, since this has also been used in the comparison results \cite{LKKLpRegularization2013}.

In each step of \eqref{eq:theIteration}, one shearlet decomposition and reconstruction step needs to be performed for which \emph{Shearlab} \cite{shearlab}
is used. In all experiments, a discretization of the domain with a $512 \times 512$  grid is used and the shearlet system of Shearlab using $5$ scales. \hl{We subsample the shearlet system according to Definition \ref{def:shearletSystem} using a mask.}

We select as domain $[-1,1]^2$, and let the scatterer be supported in $B_R$ with $R = 0.75$. We then pick $T$ transmitter-receiver pairs equidistributed
on the circle of radius $0.9$, \hl{where $T = 18$ in the first experiment and $T = 32$ in the second experiment}. Thus \hl{$2T$} Lippmann-Schwinger equations need to be solved in every step, \hl{$T$} for the evaluation of $\mathcal{N}(f_n)$
for the different single layer potentials, and \hl{$T$} for the evaluation of $[\mathcal{N}'(f_n)]^*$.

We then solve these equations with a simple, and admittedly slower, method than \cite{LKKLpRegularization2013}, by discretizing, and then solve the resulting linear system using a GMRES
iteration without preconditioning. The results in Subsection \ref{subsec:exp_results} show that even with this simple approach the advantage of the shearlet
regularization over other regularization methods can be observed. The increased runtime per step does not affect the overall runtime significantly, since we require less
iterations. However, using the method of \cite{LKKLpRegularization2013} to solve the Lippmann-Schwinger equations should provide a significant further speed-up in our algorithm, when aiming for higher numerical efficiency and not only for the accuracy of the reconstruction.

As scatterers $f$, we consider prototypes of cartoon-like functions, as in Figures~\ref{fig:Experiment1} and \ref{fig:Experiment2}.

\subsubsection{Comparison Results}
\label{subsec:exp_results}
We compare our approach with two other approaches, first the method introduced in \cite{LKKLpRegularization2013},
which is based on the assumption that the scatterer is itself sparse and hence an $L^1$ regularization is used, solving
\[
\min_{f \in L^2(B_R)} \left( \frac{1}{2}\left\|N_f - N^{\epsilon}_{\mathrm{meas}}  \right\|^2_{\mathrm{HS}(L^2(\Gamma_{\textrm{inc}}), L^2(\Gamma_{\mathrm{meas}}))} + \alpha \|f\|_{L^1(B_R)}\right)
\]
via the Landweber iteration \eqref{eq:l1reg_solver}, and second
\[
\min_{f \in L^2(B_R)} \left\|N_f - N^{\epsilon}_{\mathrm{meas}}  \right\|_{\mathrm{HS}(L^2(\Gamma_{\textrm{inc}}), L^2(\Gamma_{\mathrm{meas}}))},
\]
which does not contain a regularization term, hence does not exploit sparsity in any way. We stop the iteration when \eqref{eq:eq:theDiscrPrinciple} is achieved. This method with the regularization term based on the $L^1(B_R)$ is not covered by the theoretical results in \cite{LKKLpRegularization2013} which only yield convergence results for $L^p$ norms with $p>1$. Since this approach is used for the numerical results in \cite{LKKLpRegularization2013}, we also compare with this method rather than with an $L^p$ penalty for $p>1$.


In the first set of experiments we choose a wave number of $k_0 = 20$ and compute reconstructions with our approach, see Subsection
\ref{subsec:algorithmicapproach}.
The different noise levels that we impose are described in Table~\ref{tab:1} and Figure~\ref{fig:Experiment1}. In Table \ref{tab:1}, for each of the three regularization methods, we provide the relative error measured in the discrete $L^2$-norm as well as the number of iterations until \eqref{eq:eq:theDiscrPrinciple} is achieved for different noise levels.

\begin{figure}[htb]
     \centering

        \centering
       \quad \ \includegraphics[width=0.345\textwidth]{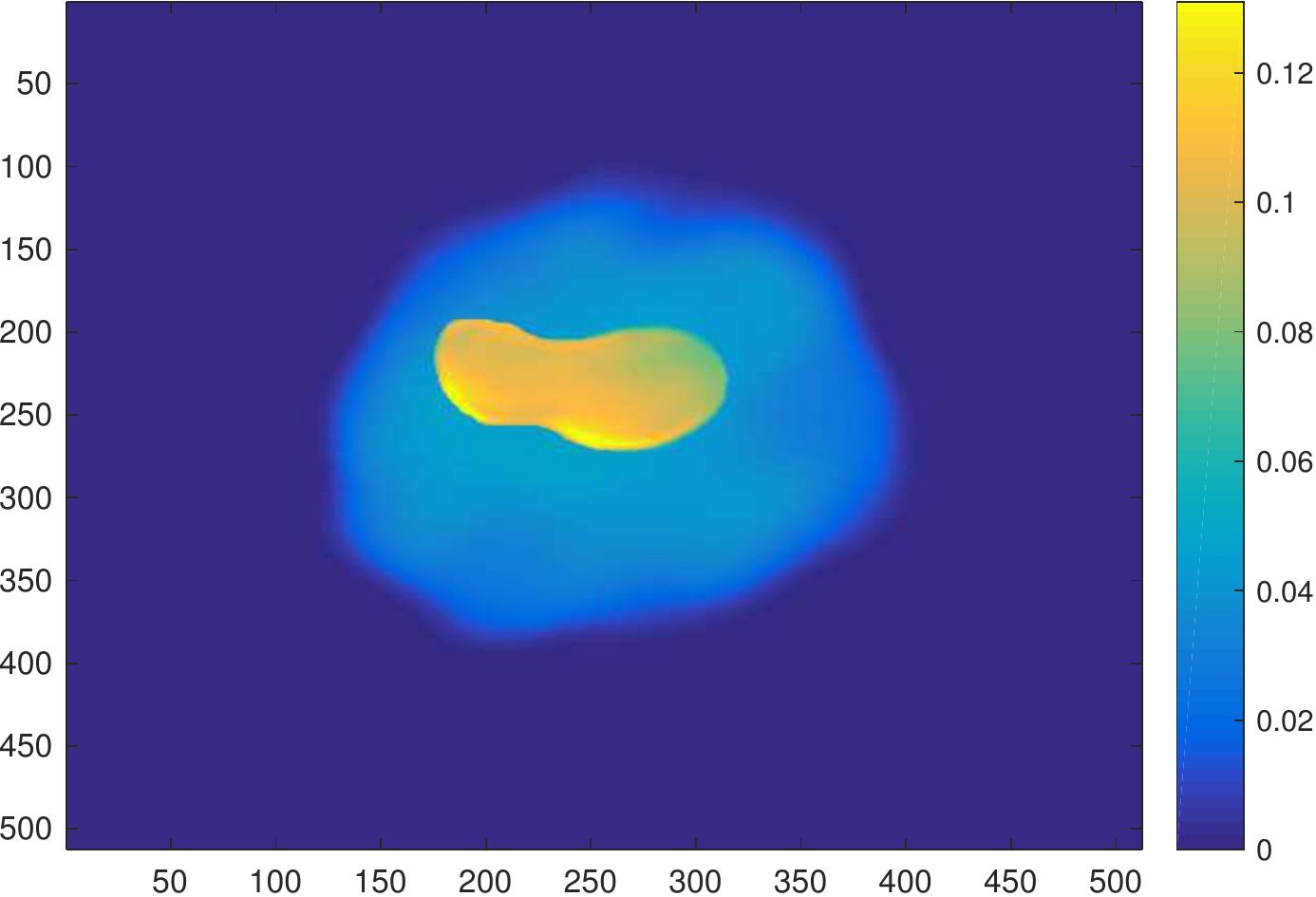}
    \\
        \centering
        \includegraphics[width=0.30\textwidth]{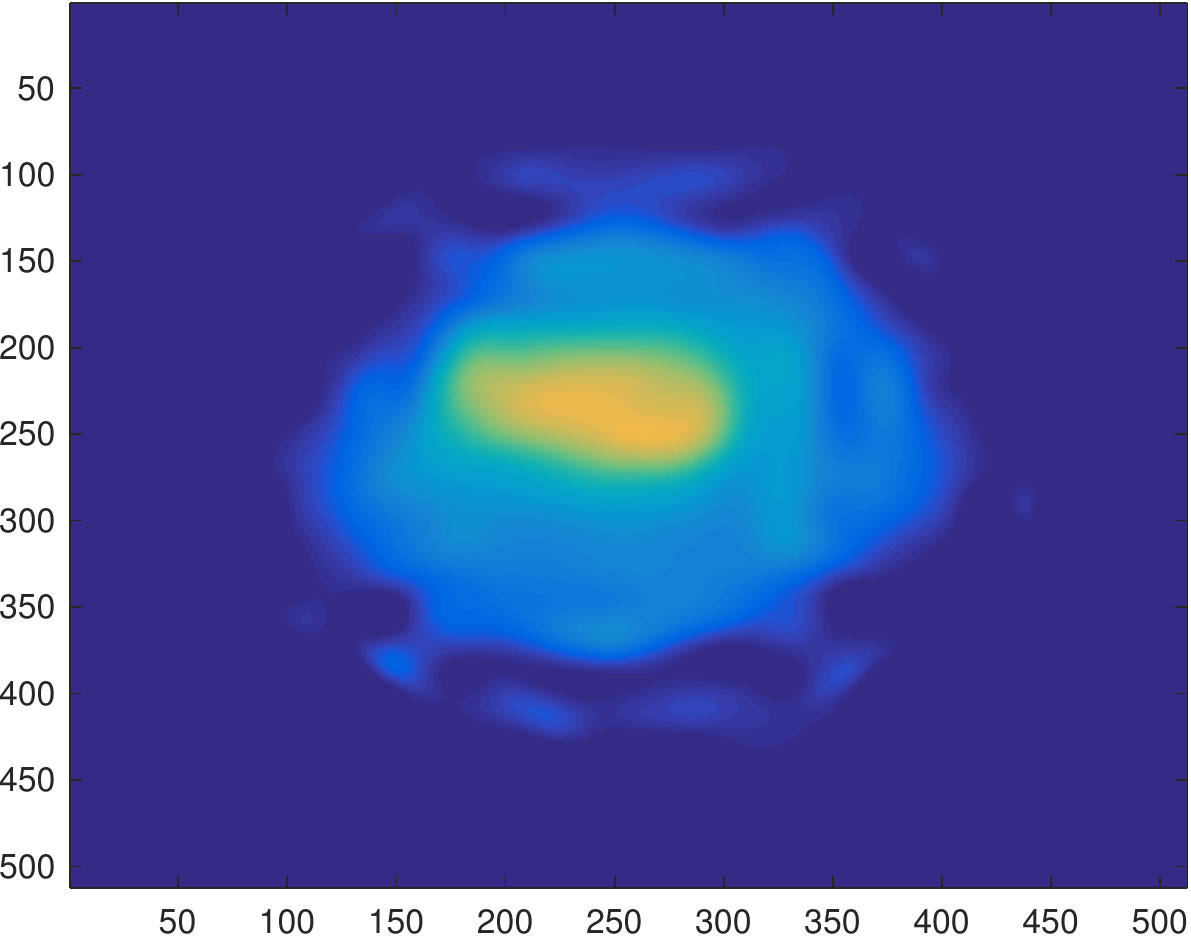}
        \centering
        \includegraphics[width=0.30\textwidth]{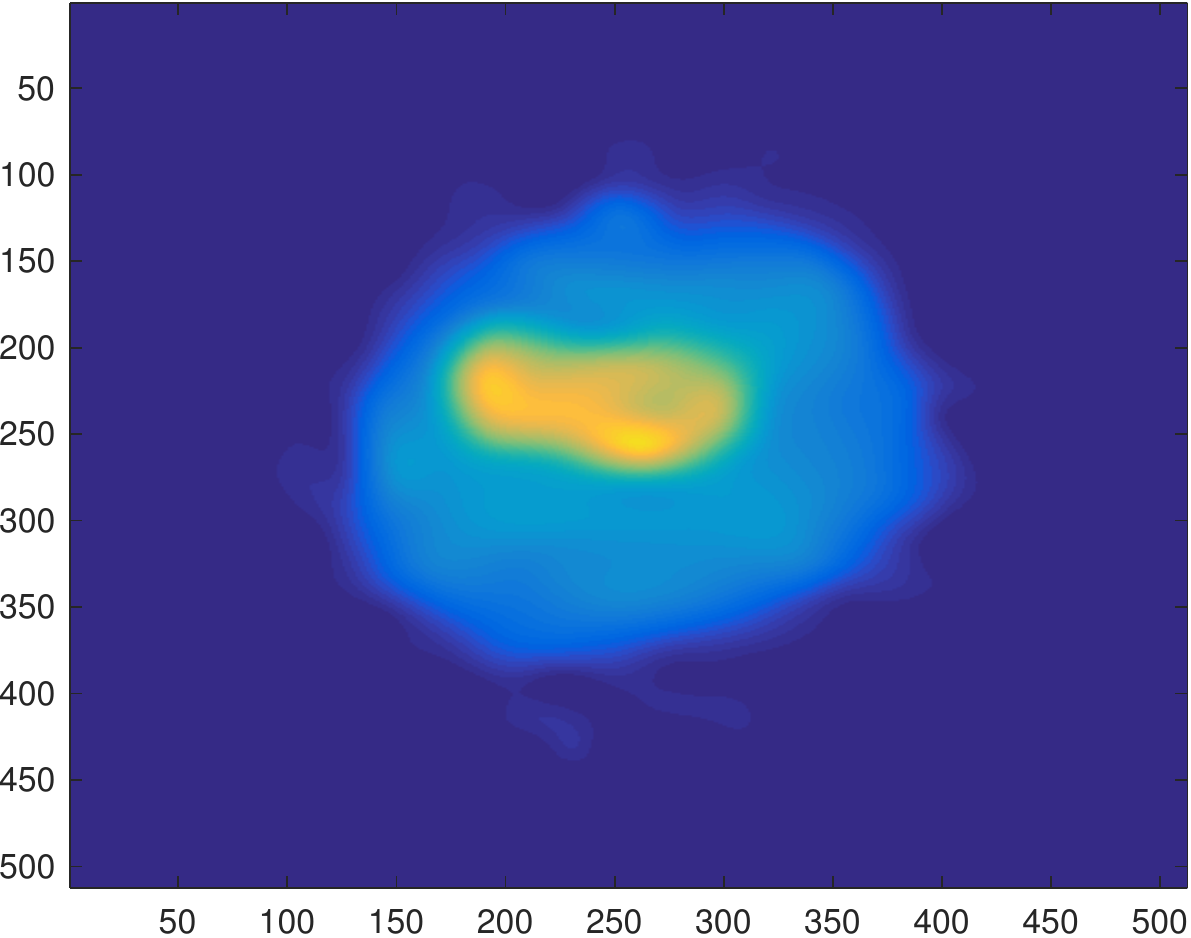}
        \centering
      \includegraphics[width=0.30\textwidth]{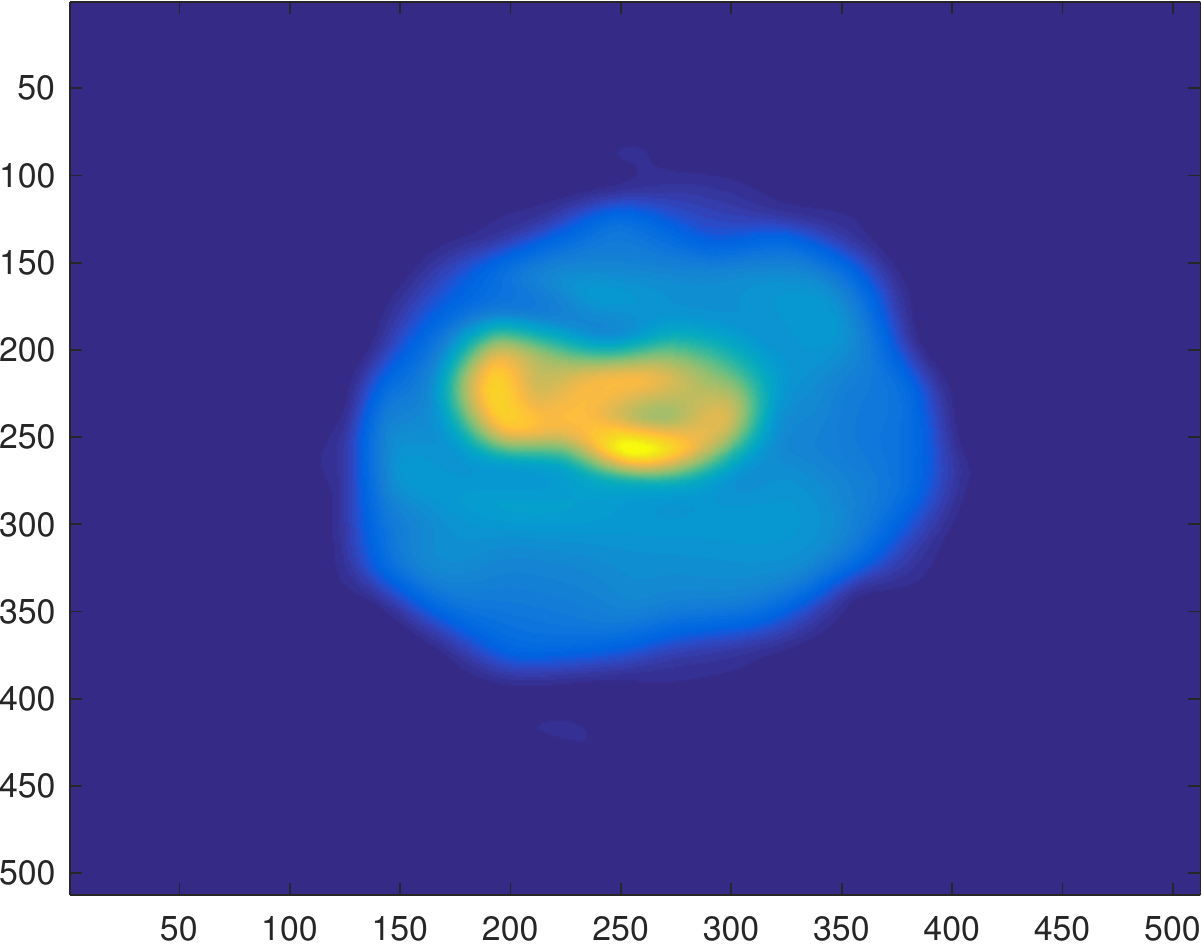}
        \\
        \centering
        \includegraphics[width=0.30\textwidth]{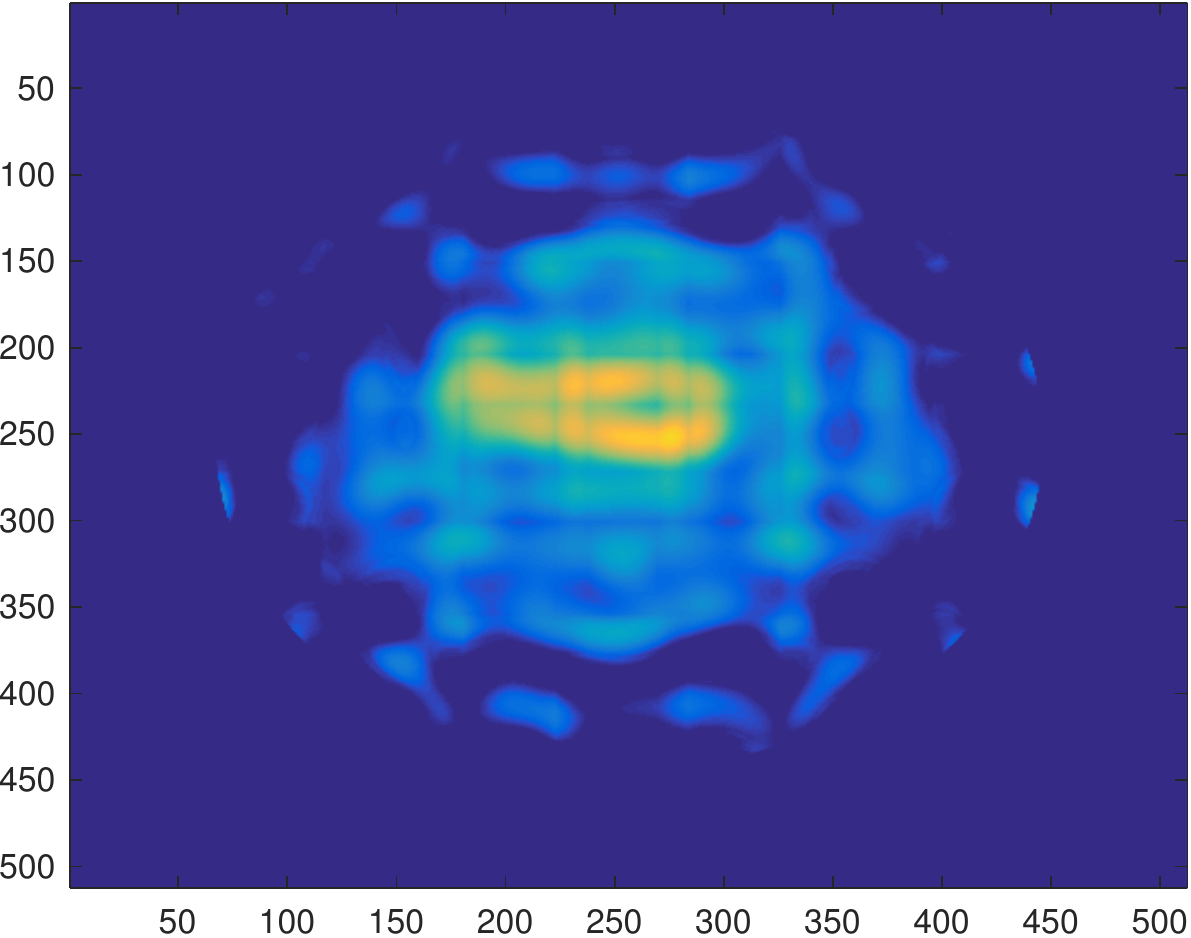}
        \centering
        \includegraphics[width=0.30\textwidth]{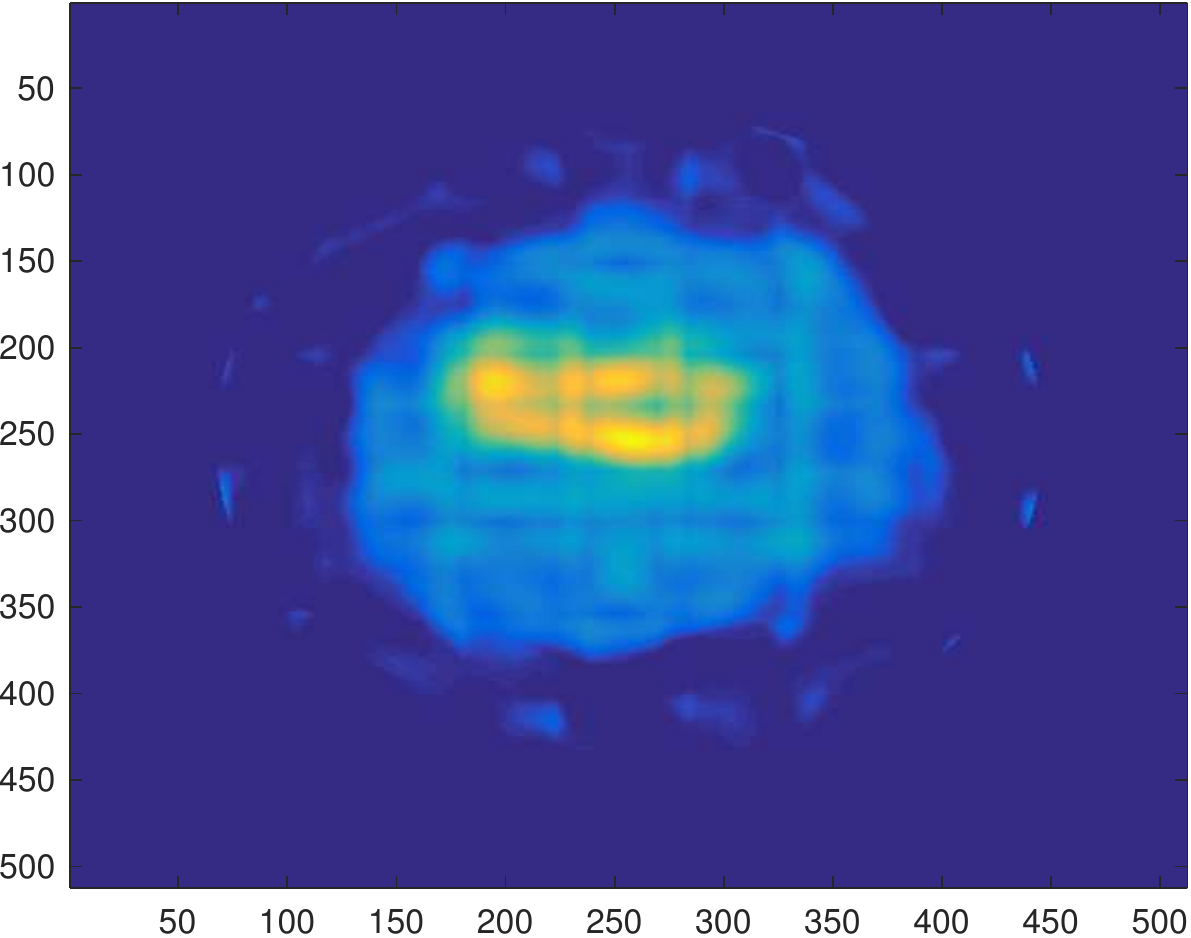}
        \centering
        \includegraphics[width = 0.30\textwidth]{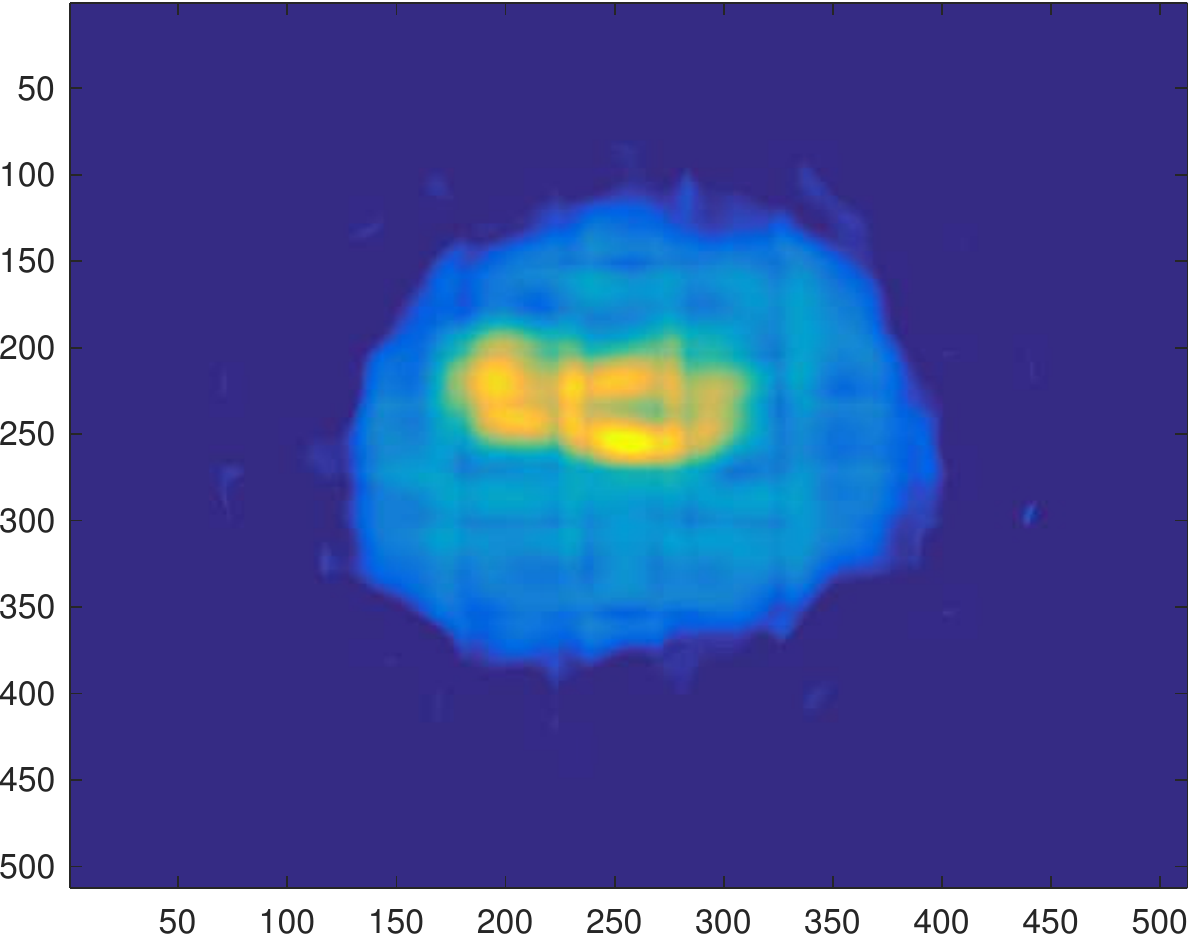}
    \caption{\textbf{Top:} \hl{Cartoon-like} scatterer. \textbf{Second Row:} Reconstructed scatterers using the shearlet regularization, \hl{relative noise levels from left to right: $\epsilon = 0.08, 0.02, 0.005$}. \textbf{Third Row:} Reconstructed scatterers using the $L^1$ regularization, \hl{relative noise levels from left to right: $\epsilon = 0.08, 0.02, 0.005$}. \hl{The colormaps in all images coincide with that of the top image.}}
    \label{fig:Experiment1}
\end{figure}

\begin{table}[!h]

\scriptsize  \centering
    \begin{tabular}{ | c | l |r | r| r | }
    \hline
    $\mathbf{k_0 = 20}$ & \ Regularization method \ & \ rel.Noise lvl \ & \ Relative error \ & \  \#iterations \ \\ \hline
    1. \quad \ & \ $L^1$ Tikhonov & 0.08 \ & 0.2153 & 7    \   \\
    2. \quad \ & \ Shearlets & 0.08 \ & \textbf{0.1274} &  7 \       \\
    3. \quad \ & \ No Penalty &  0.08 \ & 0.2188 &  \  9  \ \\ \hline
    4. \quad \ & \ $L^1$ Tikhonov  & 0.04 \ &  0.1435 & 13 \        \\
    5. \quad \ & \ Shearlets & 0.04 \ & \textbf{0.0920} &  \  12  \ \\
    6. \quad \ & \ No Penalty & 0.04 \ &  0.1567 & \   12   \ \\ \hline
    7. \quad \ & \ $L^1$ Tikhonov & 0.02 \ & 0.1162 &  \  19  \ \\
    8. \quad \ & \ Shearlets &  0.02  \ &  \textbf{0.0723} & \   19   \ \\
    9. \quad \ & \ No Penalty &  0.02  \ &  0.1161 & \   19   \ \\ \hline
    10. \quad \ & \ $L^1$ Tikhonov &  0.005  \ & 0.0848 &  \  96  \ \\
    11. \quad \ & \ Shearlets & 0.005 \ &  \textbf{0.0665} & \   48   \ \\
    12. \quad \ & \ No Penalty & 0.005 \ &  0.0930 & \   83   \ \\
    \hline
    \end{tabular}
\caption{Numerical results of the three regularization methods for different noise levels with the scatterer chosen as in Figure \ref{fig:Experiment1}.}
\label{tab:1}
\end{table}


The shearlet scheme shows the best performance both visually and with respect to the relative error.  The inferior performance of the $L^1$ regularization from \cite{LKKLpRegularization2013}  is due to the fact that the scatterer is not sparse itself in the sense of having a relatively small support. Certainly, if no penalty term is used, then the solution is not at all adapted to the  specific structure and expectedly, the performance is worse.

\begin{figure}[!h]
     \centering  \ \hspace{0.25cm}
        \includegraphics[width=0.375\textwidth]{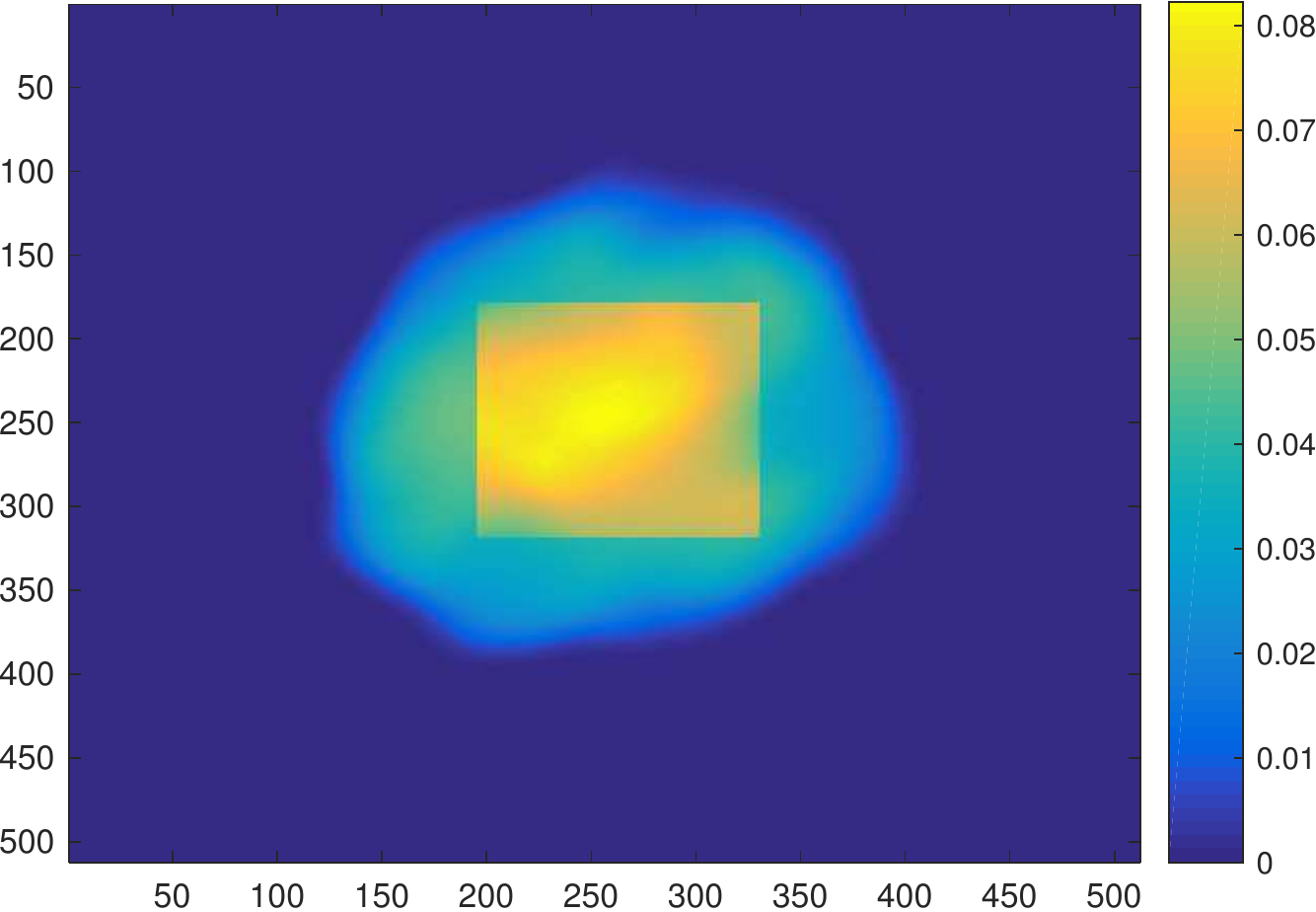}

        \centering
        \includegraphics[width=0.325\textwidth]{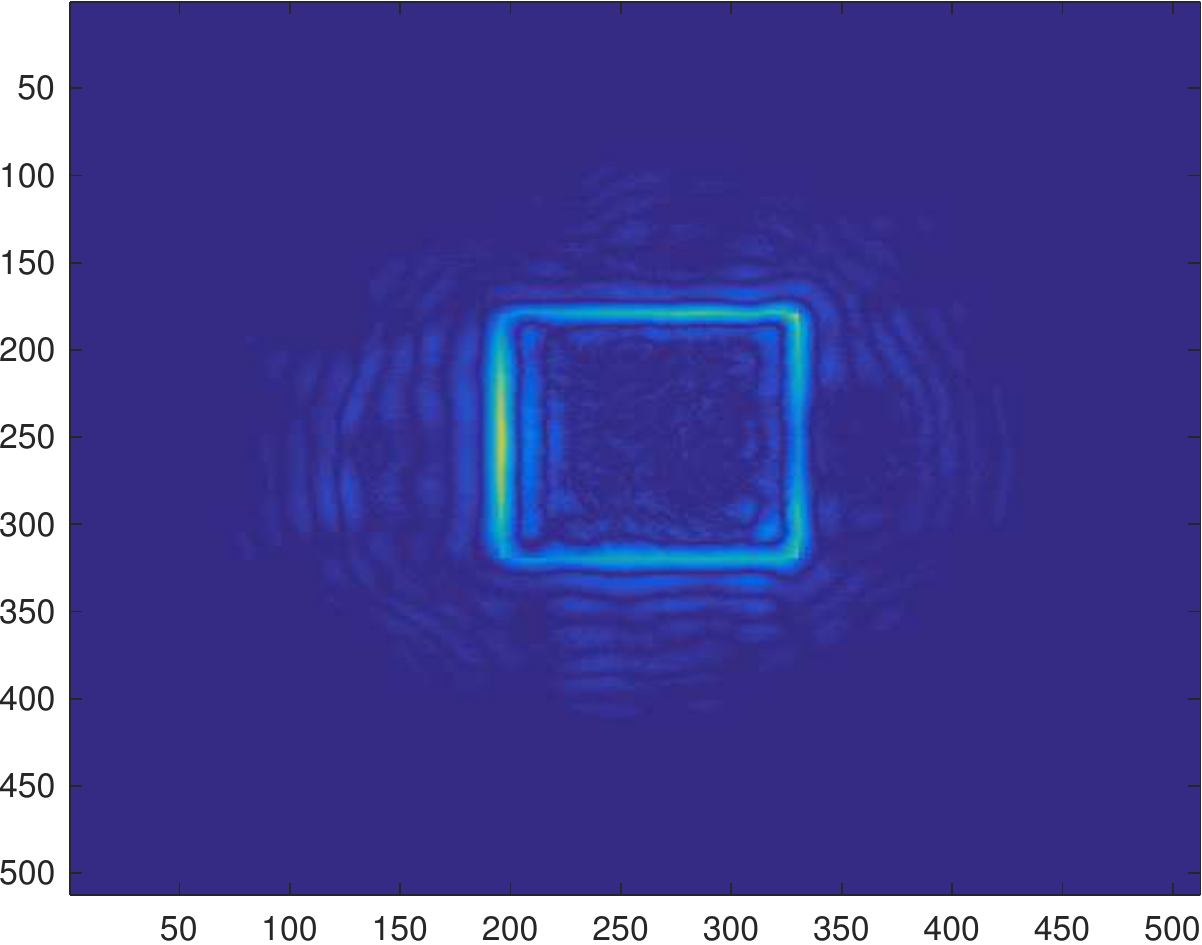}~\includegraphics[width=0.325\textwidth]{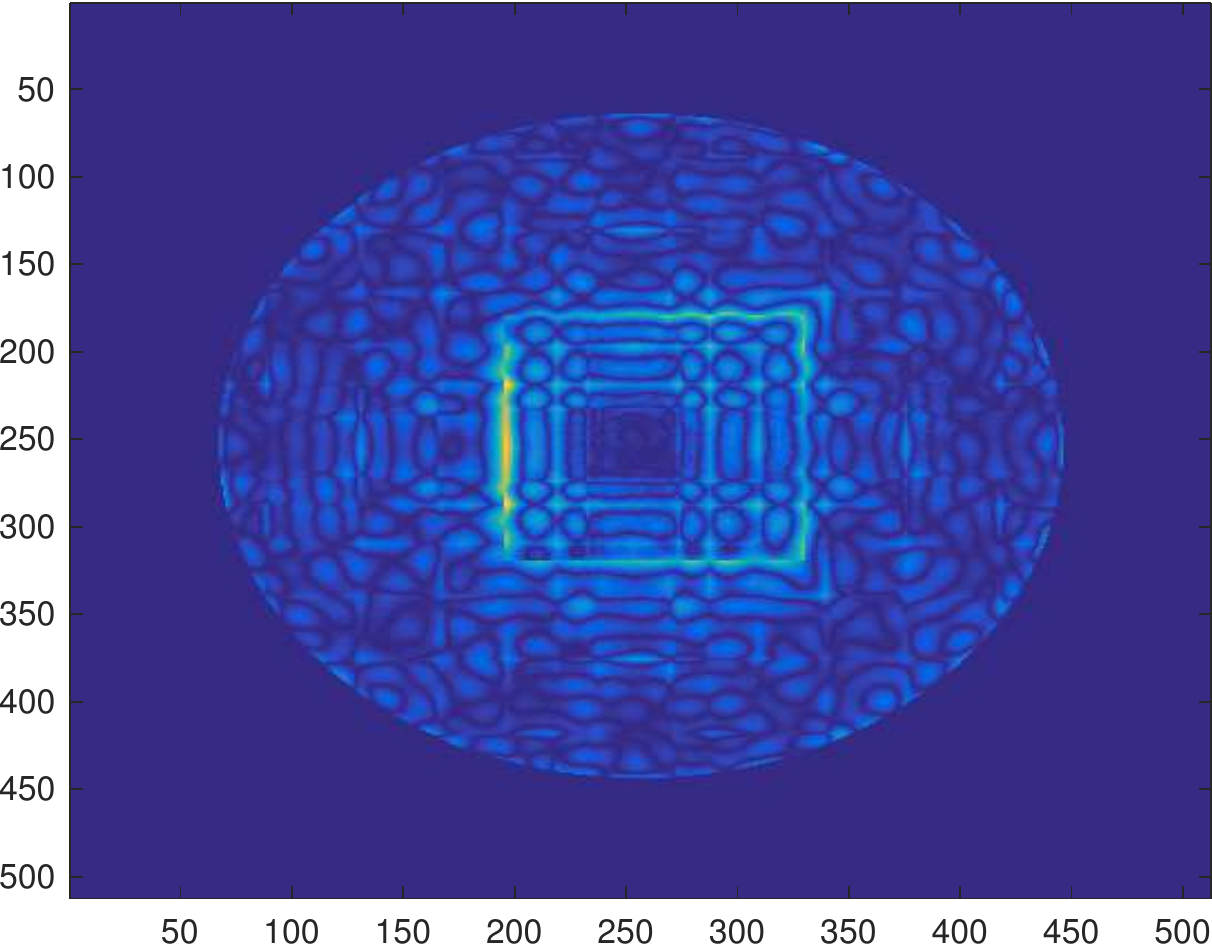}~\includegraphics[width=0.378\textwidth]{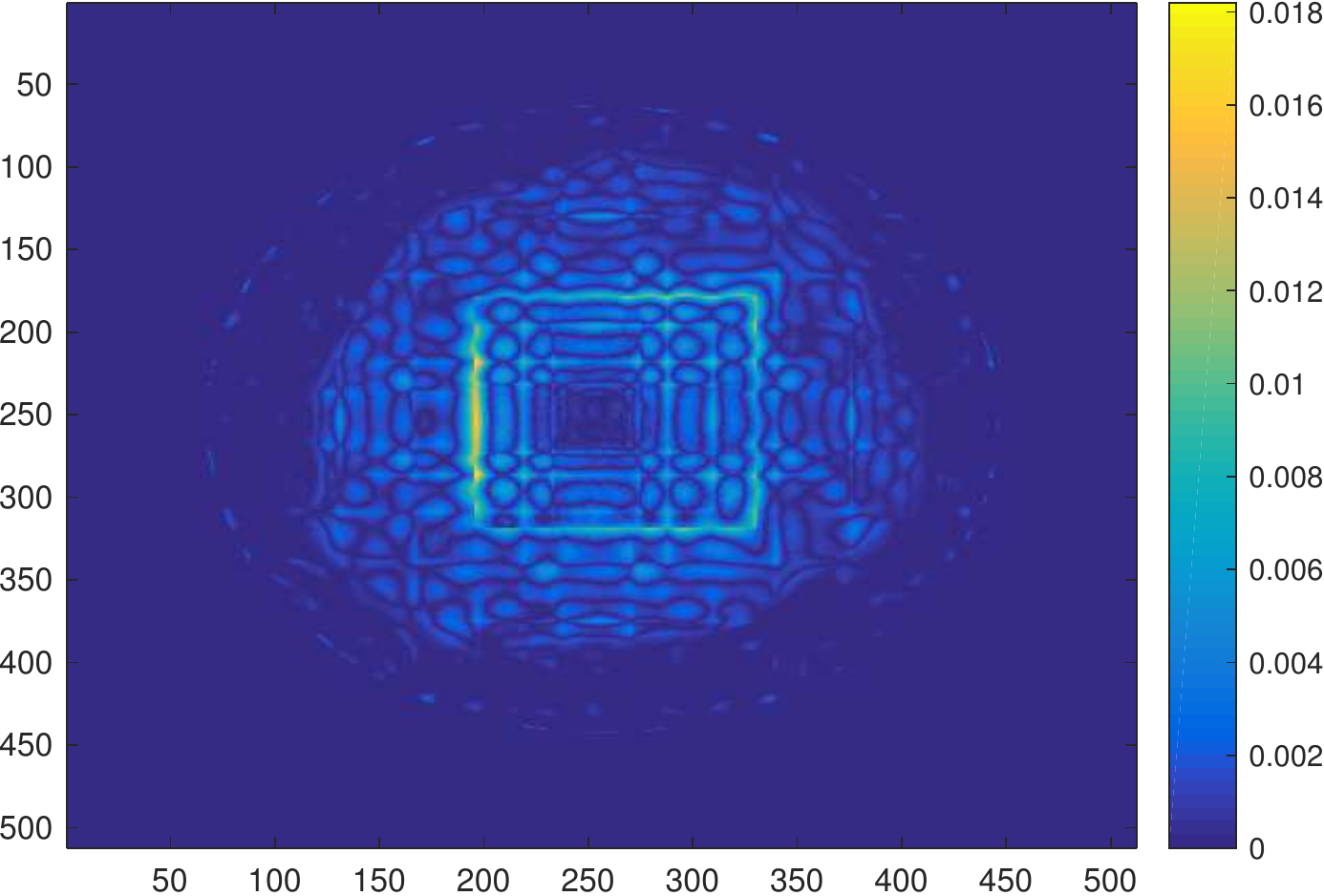}
    \caption{\hl{\textbf{Top:} Cartoon-like scatterer with piecewise smooth jump curve, \textbf{Second Row:(from left to right)} 
    Error of the reconstruction of the cartoon-like scatterer from the top row using the shearlet regularization, using no penalty term, and using $L^1$ Tikhonov Regularization. The relative noise level is 0.001. }}
        \label{fig:Experiment2}
\end{figure}

Top: Cartoon-like scatterer with piecewise smooth jump curve,
Second Row: (from left to right) Error of the reconstruction of the cartoon-like scatterer using the shearlet regularization, using no penalty term, and using L1 Tikhonov Regularization. The relative noise level is 0.001. 

\begin{table}[h!]

\scriptsize  \centering
    \begin{tabular}{ | c | l |r | r| r | }
    \hline
    $\mathbf{k_0 = 25}$ & \ Regularization method \ & \ rel. Noise level \ & \ Relative error \ & \  \#iterations \ \\ \hline
    1. \quad \ & \ $L^1$ Tikhonov & 0.02 \ & 0.0714 & 14    \   \\
    2. \quad \ & \ Shearlets & 0.02 \ & \textbf{0.0426} & 14  \       \\
    3. \quad \ & \ No Penalty & 0.02 \ & 0.0717 &  \  16  \ \\ \hline

    4. \quad \ & \ $L^1$ Tikhonov  & 0.01 \ &  0.0587 & 23 \        \\
    5. \quad \ & \ Shearlets & 0.01 \ & \textbf{0.0383} &  \  19  \ \\
    6. \quad \ & \ No Penalty & 0.01 \ &  0.0620 & \   23   \ \\ \hline

    7. \quad \ & \ $L^1$ Tikhonov & 0.005 \ & 0.0561 &  \  32  \ \\
    8. \quad \ & \ Shearlets & 0.005 \ &  \textbf{0.0346} & \   33   \ \\
    9. \quad \ & \ No Penalty & 0.005 \ &  0.0577 & \   42   \ \\ \hline

    10. \quad \ & \ $L^1$ Tikhonov & 0.001 \ & 0.0420 &  \  271  \ \\
    11. \quad \ & \ Shearlets & 0.001 \ &  \textbf{0.0334} & \   72  \ \\
    12. \quad \ & \ No Penalty & 0.001 \ & 0.0429 & \   277   \ \\\hline
    \end{tabular}
\caption{Numerical results of the three regularization methods for different noise levels with the scatterer chosen as in \ref{fig:Experiment2}.}
\label{tab:2}
\end{table}

We also conducted a second set of experiments with a different wave number, i.e., $k_0=25$ and display the results in Table \ref{tab:2}.
Furthermore, the reconstruction error is depicted in Figure \ref{fig:Experiment2}, where we observe that the shearlet regularization produces satisfying results. Most importantly, the singularity curve of the scatterer, which is the most prominent feature of the cartoon model, is obtained with decent precision. Interestingly, the shearlet regularization also requires the least number of iterations in this example.

The reason for the superior performance of the regularization by the shearlet transform is also visible in Figure \ref{fig:Experiment2}.
All three methods handle the singularity curve fairly well, although, naturally, the error is the largest at points where the singularity
is most pronounced, i.e., the upper and lower right corners as well as the middle of the left edge of the centered square. Away from the
singularities, the shearlet regularization yields a far better approximation than the other two approaches, since it is designed to deal very well with smooth regions.

\section{The Linearized Scattering Problem of the Schr\"{o}dinger equation}\label{sec:electro}

The second numerical approach to inverse scattering problems which we consider is that of first linearizing the problem. This
technique is commonly known as Born approximation.
The inverse problem we will study with this method is \emph{quantum mechanical scattering}. After introducing the
inverse problem, our goal will be to provide a theoretical basis for the application of shearlet frames. As before,
we base our considerations on the premise that edges, i.e., curve-like singularities, are the governing features
of the scatterer leading to the cartoon model as appropriate choice.

Once we step away from the nonlinear situation and introduce a linearization, then this argument may, however, not be valid anymore. It could be possible, that linearizing the inverse problem introduces a smoothing that erases all edge-like structures. Fortunately, it has been shown, see, e.g., \cite{OlaPS2001,Serov2013}, that when using a linearization via the inverse Born approximation, certain singularities of the scatterer prevail.

It turns out, that finding the inverse Born approximation is a problem of reconstructing functions from possibly corrupted and maybe limited Fourier data. Problems of this sort have been studied when the object under consideration is sparse in some basis or frame, see \cite{Adcock13breakingthe, 2DWaveletRec}.

This gives a first indication, that methods involving shearlets may be appropriate in a regularization of the inverse scattering problem with the Schr\"{o}dinger equation. However, all results on the
regularity of the inverse Born approximation in the literature describe the global regularity in the sense of weak
differentiability. In the case of cartoon-like functions, however, we  have strong local, but poor global regularity. To be able to exploit shearlets in the context of this problem, in this section, we will prove a local regularity result for the inverse Born approximation.

\subsection{The Inverse Problem}\label{sec:TheInverseProblem}

The \emph{time-independent Schr\"odinger equation}, for $f \in L^2(\R^2)$, a wave number $k>0$, and with $u = u^s + u^{\textrm{inc}}_d$ as in \eqref{eq:solution_ud}, is given by
\begin{eqnarray} \label{eq:SchrEq}
 \Delta u + (f + k^2) u &=& 0,\\
 \lim \limits_{r\to \infty} r^{\frac{1}{2}}\left(\frac{\partial u^s(x)}{\partial r} - ik u^s(x) \right)  &=& 0.
 \nonumber
\end{eqnarray}
This equation describes the motion of a single particle moving in an electric field with potential $f$. Therefore $f$ should be smooth, whenever the permittivity of the medium does not change drastically, and the solution can jump when the medium and the permittivity changes.

For $\theta \in [0,\pi]$ and $\tau_\theta = (\cos(\theta), \sin(\theta))$, the associated \emph{backscattering amplitude} is defined by
\[
 A(k,-\theta,\theta) = \int_{\R^2} e^{i k \left \langle \tau_\theta, y \right \rangle} f(y) u(y)\, dy,
\]
and the inverse problem is to reconstruct the potential $f$ from $A$. 

Using  $|\xi| \tau_\theta := \xi$ to denote  polar
coordinates, the  \emph{Born approximation} of the solution $f$ is defined as the inverse Fourier transform of
the function $\xi \mapsto A(k,-\theta,\theta)$, given by
\begin{equation}
 f_B(x) = \int_{\R^2} e^{-i\left \langle \xi, x \right \rangle} A(\frac{|\xi|}{2}, \theta, -\theta) \, d\xi.\label{eq:DefOfFB}
\end{equation}


%
Applying the Lippmann-Schwinger integral equation \eqref{eq:LippmannSchwinger} iteratively as  in \cite{OlaPS2001}, yields that
\begin{equation}
 f_B(x) = \sum_{j=1}^m q_j(x) + q_{m+1}^R(x),\label{eq:iteration}
\end{equation}
with
\begin{eqnarray}
q_j(x) &=& \frac{1}{4\pi^2} \intRtwo \intRtwo e^{i\left \langle \xi, x + \frac{y}{2}\right \rangle} f(y)
(\mathcal{G}_{|\xi|})^j(e^{i\left \langle \xi, \cdot \right \rangle})(y)\, dy\, d\xi, \quad 1 \le j \le m,\nonumber\\
q_{m+1}^R(x) &=& \frac{1}{4\pi^2} \intRtwo \intRtwo e^{i\left \langle \xi, x + \frac{y}{2}\right \rangle} f(y)
(\mathcal{G}_{|\xi|})^{m+1}(u(\cdot, |\xi|, \hat{\xi}))(y)\, dy\, d\xi,\label{qj}
\end{eqnarray}
where $\mathcal{G}_{|\xi|}: L^2_{loc}(\R^2) \to L^2_{loc}(\R^2)$ is the integral operator with kernel
$G_{\frac{{|\xi|}}{2}}(x,y)f(y)$ as defined in \eqref{eq:defi_G}.

\subsection{Local Regularity of the inverse Born Approximation}\label{subsec:Schr_Thr}

To determine the local regularity of the Born approximation $f_B$ from a given potential $f$, we invoke the
representation \eqref{eq:iteration} and observe that we can equally well examine the regularity of the
functions $q_1, \ldots, q_m$, and $ q_{m+1}^R$. To analyze the regularity of the functions $q_j$, we will make use
of the \emph{Radon transform} and the \emph{Projection Slice Theorem}, see e.g. \hl{\cite{GelGV1966genFunctRadonTrafo}}.
%
\begin{definition}
Let $\theta \in [0, \pi)$. Then the \emph{Radon transform} $\cR_\theta$ of a function $f\in L^2(\R^2)$ along a ray $\Delta_{t, \theta} =
\{x \in \R^2: x_1 \cos(\theta) + x_2 \sin(\theta) = t \}$ for $t \in \R$  is defined as
\[
\cR_\theta f(t) := \int_{\Delta_{t, \theta}} f(x)\, ds = \int \int f(x)\delta_0(\left \langle x, \tau_\theta\right \rangle  - t)\, dx.
\]
\end{definition}

\begin{theorem}[Projection Slice Theorem]
\label{theo:projectionslice}
Let $f\in L^2(\R^2)$. Then, for all $\theta \in [0, \pi)$ and $\xi \in \R$,
\[
\widehat{\cR_\theta f}(\xi) = \widehat{f}(\xi \cos(\theta), \xi \sin(\theta)).
\]
Here $\widehat f$ denotes the Fourier transform.
\end{theorem}
%

Before we state and prove the local regularity of the functions $q_j$, $j = 1, \ldots, m$, we fix some notation. We will denote the Sobolev spaces of functions with $s$ weak derivatives in $L^2(\R^2)$ by $H^s(\R^2)$ and the functions that are locally in $H^s(\R^2)$ by $H^s_{loc}(\R^2)$. Furthermore, $H^s(x)$  is the local Sobolev space of $s$ times weakly differentiable functions with weak derivatives in $L^2(x)$, and $L^2(x)$ the space of distributions that are $L^2$ on a neighborhood of $x$, see \cite{BeaR1984}. Finally,
we denote by $C^{k,\alpha}(\R^2)$ the $k$ times differentiable functions with a H\"older continuous $k$-th derivative with H\"older coefficient $\alpha$, writing $C^{k}(\R^2)$ if the $k$-th derivative is simply continuous.

Then we have the following regularity result.
\begin{theorem}\label{thm:MainSchroedinger}
Let $\epsilon >0$, let $s \in \N, s \geq 2$, and let, for some $x_0\in \R^2$, $f\in L^2(\R^2) \cap H^{s+\epsilon}(x_0)$  be compactly supported and real valued. Then the $q_j$ defined in (\ref{qj}) satisfy $q_j\in H^s(x_0)$ for all $j\in \N$, and, in particular, $f_B \in H^s(x_0)$.
\end{theorem}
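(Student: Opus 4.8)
The plan is to establish the local regularity of each $q_j$ by combining a Fourier description of $q_j$ with the filtered-backprojection structure of the inverse Born transform, and then to pass from global to local regularity by a cutoff and pseudolocality argument. First I would unwind the definition \eqref{qj}: since the Helmholtz fundamental solution $G_t(y,z)=\tfrac i4 H_0^{(1)}(t|y-z|)$ has Fourier transform $(|\zeta|^2-t^2-i0)^{-1}$ in the difference variable, expanding the $j$-fold operator $(\mathcal{G}_{|\xi|})^j$ and carrying out the pure-phase integration in $x$ shows that $q_j$ is a tempered distribution whose Fourier transform is
\[
\widehat{q_j}(\eta)=c_j\int_{(\R^2)^j}\widehat f(a_0)\,\widehat f(a_1)\cdots\widehat f(a_j)\,\prod_{l=1}^{j}\frac{d\zeta_l}{|\zeta_l|^2-|\eta|^2/4-i0},
\]
where $a_0,\dots,a_j$ are fixed affine combinations of $\eta,\zeta_1,\dots,\zeta_j$ determined by \eqref{eq:DefOfFB}; this is a $(j+1)$-linear integral in $\widehat f$ carrying $j$ outgoing-resolvent denominators supported on the circles $|\zeta_l|=|\eta|/2$ (for $j=1$ a direct computation gives $\widehat{q_1}(\eta)=c\int_{\R^2}\widehat f(-\tfrac\eta2-\zeta)\,\widehat f(\zeta-\eta)\,(|\zeta|^2-|\eta|^2/4-i0)^{-1}\,d\zeta$). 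Passing to polar coordinates $\eta=r\tau_\theta$ realizes $q_j$ as the filtered backprojection of the $j$-th Born correction $(r,\theta)\mapsto B_j(r,\theta)$ of the backscattering amplitude, the factor $|r|$ arising from $d\xi=|r|\,dr\,d\theta$ being exactly the ramp filter that inverts the operator $\cR^*\cR$ of order $-1$; here the Radon transform and the Projection Slice Theorem (Theorem~\ref{theo:projectionslice}) enter, identifying the $r$-slices $B_j(\cdot,\theta)$ and the restrictions to the circles with one-dimensional Radon data of the functions assembled from $f$, so that the regularity of $q_j$ reduces to a regularity statement for these slices plus the known smoothing of the backprojection.

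Next I would localize. Fix $\chi\in C_c^\infty(\R^2)$ with $\chi\equiv1$ near $x_0$ and $\suppp\chi$ inside a neighborhood on which $f\in H^{s+\epsilon}$, and write $f=g+h$ with $g:=\chi f\in H^{s+\epsilon}(\R^2)$ compactly supported and $h:=(1-\chi)f\in L^2(\R^2)$ compactly supported and vanishing near $x_0$. Expanding the $(j+1)$-linear form $q_j[f,\dots,f]$ into $2^{j+1}$ terms, I would treat the all-$g$ term and the $h$-containing terms separately. For $q_j[g,\dots,g]$ I would prove membership in $H^s(\R^2)$ (so in particular near $x_0$): each factor $(|\zeta_l|^2-|\eta|^2/4-i0)^{-1}$ supplies essentially two orders of smoothing, controlled off its pole by the decay of $\widehat g$ and near the pole by a limiting-absorption / $L^2$-restriction estimate on the resonant circle that exploits its curvature, whose borderline character accounts for the $\epsilon$-loss in the statement. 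For a term containing at least one factor $h$, I would show it is $C^\infty$ near $x_0$: since the kernels $G_{|\xi|/2}(\cdot,\cdot)$ are smooth off the diagonal and $\suppp h$ is disjoint from $x_0$, tracking supports through the iterated kernels shows such a term is smooth on a neighborhood of $x_0$ (a pseudolocality argument in the local Sobolev framework of \cite{BeaR1984} for the spaces $H^s(x_0),L^2(x_0)$). Summing the two kinds of contributions yields $q_j\in H^s(x_0)$ for every $j\in\N$; it suffices to carry this out carefully for $j=1$, since for $j\ge2$ the extra resolvent factors only improve the estimates (one may equivalently proceed by induction on $j$).

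For the ``in particular'' statement, one combines the above with the expansion \eqref{eq:iteration}: $f_B$ is the sum of the corrections $\sum_{j=1}^m q_j$, the remainder $q_{m+1}^R$, and the leading linear term, which --- in the normalization of \eqref{eq:DefOfFB} --- is $f$ itself and hence lies in $H^{s+\epsilon}(x_0)\subset H^s(x_0)$; the remainder is handled by the same estimates applied with $m+1$ resolvent factors for $m$ large, together with the interior elliptic regularity near $x_0$ of the total field $u$ entering \eqref{qj}. Thus $f_B\in H^s(x_0)$. The step I expect to be the main obstacle is the estimate of the all-$g$ term: extracting the full two-derivative gain from the $j$-fold product of outgoing resolvents uniformly in $|\eta|$ while losing only $\epsilon$ derivatives requires the sharp limiting-absorption bound at the resonant circles $|\zeta_l|=|\eta|/2$ --- precisely the point where the Radon-transform reformulation is needed to organize the angular integrations and restrictions --- together with the bookkeeping needed to keep the $2^{j+1}$ cross-terms under control uniformly in $j$.
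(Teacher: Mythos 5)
Your skeleton agrees with the paper's in outline (cutoff near $x_0$, multilinear expansion of $q_j$, Radon transform and Theorem~\ref{theo:projectionslice}, recursion in $j$, remainder $q_{m+1}^R$ via \cite{OlaPS2001}), but the two steps that carry all the analytic weight are gaps. For the all-regular term you propose a \emph{global} $H^s(\R^2)$ multilinear estimate based on limiting absorption and $L^2$-restriction to the resonant circles, and you yourself flag this as the expected main obstacle without carrying it out. The paper does not prove such a global estimate. It instead writes the term as $a(x,D)f$ with the non-smooth symbol $a(x,\xi)=\int\mathcal{F}(\phi f)(\eta)\,(\langle\eta,\xi\rangle-i0)^{-1}e^{i\langle x,\eta\rangle}\,d\eta$, splits $(\langle\eta,\xi\rangle-i0)^{-1}$ into a principal value plus a delta as in \eqref{eq:TheSplittingOfi0}, converts the principal-value part via the Projection Slice Theorem into $\textsf{p.v.}\int_0^\pi \cos(\theta)^{-1}\cR_\theta(\phi f)(\langle x,\tau_\theta\rangle)\,d\theta$, and proves that the truncated angular integrals \eqref{eq:theCauchySequence} form a Cauchy sequence in $H^s(x_0)$ by establishing H\"older continuity in $\theta$ of $\theta\mapsto D^\gamma\cR_\theta(\phi f)(\langle\cdot,\tau_\theta\rangle)$; this is exactly where the extra $\epsilon$ of smoothness is spent (via $H^{s+\frac12+\epsilon}(\R)\hookrightarrow C^{s,\epsilon}(\R)$) and where the uniform-in-$\xi$ bound required by the local boundedness theorem of \cite{BeaR1984} comes from. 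Without this Cauchy-sequence/H\"older argument, or a genuine substitute, your proof has no quantitative core.

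The second gap is your treatment of the cross terms containing $h=(1-\chi)f$. The map $(u,v)\mapsto q_1[u,v]$ is an oscillatory integral with phase $e^{i\langle\xi,x+\frac y2+\frac z2\rangle}$ and a Hankel kernel decaying only like $|\xi|^{-1/2}$; it is not a local operator, and it transports singular support along a nontrivial relation (the source of the well-known backscattering artifacts). Hence ``the kernel is smooth off the diagonal and $\suppp h$ avoids $x_0$'' does not imply that $q_1[g,h]$ is smooth near $x_0$; moreover these mixed terms still contain the factor $g=\chi f$, which is only $H^{s+\epsilon}$ near $x_0$, so the conclusion ``$C^\infty$ near $x_0$'' is stronger than what can be true in general. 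The paper is more careful: the mixed term $\mathcal{I}_{2,1}$ is handled by the \emph{same} pseudodifferential argument as $\mathcal{I}_1$ (symbol built from the regular factor $\phi f$, the resulting operator bounded on $H^s(x_0)$ applied to the other factor, which lies in $H^s(x_0)$ trivially because it vanishes there), and only the term with both factors equal to $(1-\phi)f$ is dispatched by a locality argument, via the truncated symbols $b^M$ and an $L^2$ limit. You would need to restructure your cross-term analysis along these lines rather than appeal to kernel-support reasoning.
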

\begin{proof}
We start by proving the local regularity of $q_1$.
Let $x_0 \in \R^2$ be such that $f \in  H^s_{\mathrm{loc}}(x_0)$, then we aim to prove that
\[
q_1(x) =  \frac{1}{4\pi^2} \intRtwo \intRtwo \intRtwo e^{i\left \langle \xi, x + \frac{y}{2} + \frac{z}{2}\right \rangle} f(y)
\mathcal{G}_{|\xi|}(y,z) f(z)\, dz\, dy\, d\xi \in H^s(x_0).
\]
For this, we introduce a smooth cutoff function $\phi$ supported in a neighborhood $U_{x_0}$ of $x_0$, where $f$ is $H^{s+\epsilon}$ such that $\phi \equiv 1$ on a strictly smaller neighborhood of $x_0$. For $x \in U_{x_0}$, the function $\phi$ is now used to decompose $q_1$ as
\begin{align*}
q_1(x) = & \ \frac{1}{4\pi^2} \intRtwo \intRtwo \intRtwo e^{i\left \langle \xi, x + \frac{y}{2} + \frac{z}{2}\right \rangle} \phi(y) f(y) \mathcal{G}_{|\xi|}(y,z) f(z)\, dz\, dy\, d\xi\\
	+&\  \frac{1}{4\pi^2} \intRtwo \intRtwo \intRtwo e^{i\left \langle \xi, x + \frac{y}{2} + \frac{z}{2}\right \rangle} (1-\phi)(y) f(y) \mathcal{G}_{|\xi|}(y,z) f(z)\, dz\, dy\, d\xi\\
	=: & \ \mathcal{I}_1(x) + \mathcal{I}_2(x).
\end{align*}
We further decompose $\mathcal{I}_2$ as 
\begin{align*}
  \mathcal{I}_2(x) =&\  \frac{1}{4\pi^2} \intRtwo \intRtwo \intRtwo e^{i\left \langle \xi, x + \frac{y}{2} + \frac{z}{2}\right \rangle} (1-\phi)(y) f(y) \mathcal{G}_{|\xi|}(y,z) \phi(z)f(z)\, dz\, dy \, d\xi\\
  + &\ \frac{1}{4\pi^2} \intRtwo \intRtwo \intRtwo e^{i\left \langle \xi, x + \frac{y}{2} + \frac{z}{2}\right \rangle} (1-\phi)(y) f(y) \mathcal{G}_{|\xi|}(y,z) (1-\phi)(z)f(z)\, dz\, dy\, d\xi\\
  = :& \ \mathcal{I}_{2,1}(x) + \mathcal{I}_{2,2}(x).
\end{align*}
and study each of the integrals $\mathcal{I}_1$, $\mathcal{I}_{2,1}$, and $\mathcal{I}_{2,2}$
separately.

\emph{Regularity of $\mathcal{I}_1$}: We use a representation from \cite[Lem. 1.1]{OlaPS2001}, which yields, for $x \in U_{x_0}$,
\begin{align}
 \mathcal{I}_{1}(x) &= \intRtwo \intRtwo \frac{\mathcal{F}(f)(\xi) \mathcal{F}(\phi f)(\eta)}{\left \langle \eta, \xi \right \rangle - i0}
e^{ i \left \langle x, \xi + \eta \right \rangle}\, d\eta\, d\xi \nonumber\\
 &=:\int \limits_{|\xi|\geq 1}\intRtwo \frac{\mathcal{F}(f)(\xi) \mathcal{F}(\phi f)(\eta)}{\left \langle \eta, \xi \right \rangle - i0}
e^{ i \left \langle x, \xi + \eta \right \rangle}\, d\eta\, d\xi + \nu_1(x), \label{eq:noLowFrequencies}
\end{align}
where
\begin{equation} \label{eq:pv}
\left(\left \langle \xi, \eta \right \rangle -i0 \right)^{-1} = \textsf{p.v.}(\left \langle \xi, \eta \right \rangle)^{-1}-\pi i
\delta_0(\left \langle \xi, \eta \right \rangle),
\end{equation}
with $\textsf{p.v.}$ denoting the Cauchy principal value as stated in \cite[Lem. 1.2]{OlaPS2001}.
 We obtain that $\nu_1\in C^\infty(\R^2)$ by the following argument. Since
 $$\nu_1 =  \frac{1}{4\pi^2} \int \limits_{|\xi|\leq 1} \intRtwo \intRtwo e^{i\left \langle \xi, x + \frac{y}{2} + \frac{z}{2}\right \rangle} f(y) \mathcal{G}_{|\xi|}(y,z) \phi(z)f(z)\, dz\, dy \, d\xi$$
we can employ the compact support of $f$ and an estimate of the form:
$$\| \mathcal{G}_{|\xi|}(f)\|_{L^p} \leq C_p |\xi|^{2(1/q-1/p)-2}\|f\|_{L^q},$$
with $p=6, q=5/6$, which can be found in \cite{Syl2009}, to obtain that
$$\xi \mapsto \intRtwo \intRtwo e^{i\left \langle \xi, x + \frac{y}{2} + \frac{z}{2}\right \rangle} f(y) \mathcal{G}_{|\xi|}(y,z) \phi(z)f(z)\, dz\, dy \in L^1(\R^2).$$
Since we only take the Fourier transform over a bounded frequency part, we obtain that $\nu_1$ is analytic.

To show the local regularity of \eqref{eq:noLowFrequencies} as a function of $x$, we introduce the symbol
\[
 a(x, \xi): = \intRtwo \frac{\mathcal{F}(\phi f)(\eta)}{\left \langle \eta, \xi \right \rangle - i0} e^{i \left \langle x, \eta \right \rangle}  d\eta,
\quad (x ,\xi) \in \R^2 \times \R^2.
\]
It follows from \cite[Thm. 1.3]{BeaR1984}, that if the function
\[
g_\xi : U_{x_0} \to \C, \quad g_\xi(x) := a(x, \xi), \quad \xi \in \R^2,
\]
is in $H^s(x)$ for some $s\in \N$, $s >1$ and if $\|g_\xi\|_{H^s(x_0)}$ is independent of $\xi$, then the corresponding pseudo-differential operator $a(x,D)$ is a bounded operator from $H^s(x_0)$ to $H^s(x_0)$. By definition, we have
\begin{equation}\label{eq:aInu}
a(x,D)(f) = \mathcal{I}_{1}(x) - \nu_1(x).
\end{equation}
Thus it remains to prove that, for any $|\xi|\geq 1$, we have $g_\xi \in H^s(x_0)$ and $\|g_\xi\|_{H^s(x_0)}$ is independent of $\xi$. By \eqref{eq:pv}, we obtain
\begin{equation}
a(x, \xi) = \textsf{p.v.}\intRtwo \frac{\mathcal{F}(\phi f)(\eta)}{\left \langle \eta, \xi \right \rangle} e^{i \left \langle x, \eta \right \rangle}\,  d\eta - \pi i \int \limits_{\left \langle \eta, \xi \right \rangle = 0} \mathcal{F}(\phi f)(\eta) e^{i \left \langle x, \eta \right \rangle}\,  d\eta. \label{eq:TheSplittingOfi0}
\end{equation}
Since $\phi f \in H^{s+\epsilon}(\R^2)$ is compactly supported, it follows that $\mathcal{F}(\phi f) \in C^\infty(\R^2)$ and
\[
\int_{\R^2} (1+|\xi|^2)^{s+\epsilon}|\mathcal{F}(\phi f)(\eta)|^2 d\eta < \infty.
\]
Passing to polar coordinates yields
\[
 \int_{0}^{2 \pi} \int_{\R^+} r(1+|r|^2)^{s+\epsilon}|\mathcal{F}(\phi f)(r \tau_\theta)|^2 \,dr \,d\theta < \infty,
\]
and by the smoothness of
\begin{equation}\label{eq:theta_function}
\theta \mapsto \int_{\R} r(1+|r|^2)^{s+\epsilon}|\mathcal{F}(\phi f)(r \tau_\theta)|^2 d r,
\end{equation}
the terms $\int_{\R} |r|(1+|r|^2)^{s+\epsilon}|\mathcal{F}(\phi f)(r \tau_\theta)|^2 d r$ are uniformly bounded with
respect to $\theta$. This implies the desired regularity of the second term of $\eqref{eq:TheSplittingOfi0}$ as a
function of $x$.

We continue with the first term of \eqref{eq:TheSplittingOfi0}, i.e., with
\[
\textsf{p.v.}\intRtwo \frac{\mathcal{F}(\phi f)(\eta)}{\left \langle \eta, \xi \right \rangle} e^{i \left \langle x, \eta \right \rangle}\,  d\eta
= \textsf{p.v.} \int \limits_{0}^{\pi} \frac{1}{\left \langle \tau_\theta, \xi \right \rangle} \int\limits_{\R} \mathcal{F}(\phi f)(r \tau_\theta )
e^{i r \left \langle x, \tau_\theta \right \rangle } \,dr \,d\theta. \label{eq:ApplyFourierSliceHere}
\]
By substitution, w.l.o.g. we may assume that $\xi = (1,0)$. Application of Theorem~\ref{theo:projectionslice}
shows that \eqref{eq:ApplyFourierSliceHere} equals
\[
\textsf{p.v.} \int \limits_{0}^{\pi} \frac{1}{\left \langle \tau_\theta, \xi \right \rangle} \cR_\theta (\phi f)(\left \langle x, \tau_\theta \right \rangle)\, d\theta
= \lim \limits_{\epsilon \downarrow 0}  \int \limits_{[0,\frac{\pi}{2}-\epsilon] \cup [\frac{\pi}{2}+\epsilon,\pi]} \frac{1}{\cos(\theta)}
\cR_\theta (\phi f)(\left \langle \cdot, \tau_\theta \right \rangle)\, d\theta.
\]
For a given sequence $\epsilon_n \to 0$ as $n \to \infty$, we set ${E}_n:=[0,\frac{\pi}{2}-\epsilon_n] \cup [\frac{\pi}{2}+\epsilon_n,\pi]$
and show that the sequence
\begin{equation}
 \left(\,\,\int \limits_{{E}_n} \frac{1}{\cos(\theta)} \cR_\theta (\phi f)(\left \langle \cdot, \tau_\theta \right \rangle)\, d\theta\right)_{n \in \N} \label{eq:theCauchySequence}
\end{equation}
is a Cauchy sequence in $H^s(x_0)$.

We first observe that by Theorem \ref{theo:projectionslice}, the finiteness of the integral in
\eqref{eq:theta_function} implies that
\[
\xi \mapsto |\xi|^{s+\frac{1}{2}+\epsilon} \cdot \widehat{\cR_\theta (\phi f)}(\xi) \in L^2(\R).
\]
This yields $\cR_\theta (\phi f) \in H^{s+\frac{1}{2}+\epsilon}(\R)$. By the Sobolev embedding theorem, \cite[Thm 5.4]{Adams1975} we have $H^{s+\frac{1}{2}+\epsilon}(\R) \hookrightarrow C^{s,\epsilon}(\R)$,
which implies that $\cR_\theta (\phi f) (\left \langle \cdot, \tau_\theta\right \rangle) \in C^{s,\epsilon}(\R^2)$. Hence for a multi-index $\gamma$ with
$|\gamma| \leq s$, taking the $\gamma$th derivative of each element of \eqref{eq:theCauchySequence} yields
\begin{equation} \label{eq:Cauchy2}
 \left(\,\,\int \limits_{\mathcal{E}_n} \frac{1}{\cos(\theta)} D^{\gamma} (\cR_\theta (\phi f))(\left \langle \cdot, \tau_\theta \right \rangle) d\theta\right)_{n \in \N}.
\end{equation}
To prove that this sequence is a Cauchy sequence in $L^2(x_0)$ we show that $\theta \mapsto D^{\gamma}\cR_\theta (\phi f)(\left \langle \cdot, \tau_\theta \right \rangle)$
is H\"older continuous on $[0,\pi)$. In fact, by the chain rule and Theorem \ref{theo:projectionslice}, for
$\theta, \theta' \in [0,\pi)$, we have
\begin{eqnarray}\nonumber
\lefteqn{\left|D^\gamma (\cR_\theta (\phi f))(\left \langle \cdot, \tau_\theta \right \rangle)
- D^\gamma (\cR_{\theta'} (\phi f))(\left \langle \cdot, \tau_{\theta'} \right \rangle) \right|}\\ \nonumber
& \leq & C \cdot \left(\left|(\cR_{\theta'} (\phi f))^{(|\gamma|)}(\left \langle \cdot, \tau_{\theta'} \right \rangle)
- (\cR_{\theta'} (\phi f))^{(|\gamma|)}(\left \langle \cdot, \tau_{\theta}\right \rangle)\right|\right.\\
& & \hspace*{0.7cm}+ \left.\left|(\cR_{\theta'} (\phi f))^{(|\gamma|)}(\left \langle \cdot, \tau_{\theta} \right \rangle)
- (\cR_\theta (\phi f))^{(|\gamma|)}(\left \langle \cdot, \tau_{\theta}\right \rangle)\right|\right).\label{eq:twoTypesOfHoelderCont}
\end{eqnarray}
Since $\cR_\theta (\phi f) \in C^{s,\epsilon}(\R)$, we obtain that the first term of \eqref{eq:twoTypesOfHoelderCont} is bounded by
$C_0 |\tau_\theta-\tau_{\theta'}|^{\alpha}$ for some $\alpha \leq \epsilon$ and $C_0 > 0$. Hence also the local $L^2(x_0)$-norm
of the first term is bounded by $C_1 |\tau_\theta-\tau_{\theta'}|^{\alpha}$ with a possibly different constant $C_1$. In the sequel, $C_\nu$, $\nu \in \N$
will always denote a positive constant.

To estimate the $L^2(x_0)$-norm of the second term of \eqref{eq:twoTypesOfHoelderCont}, it suffices to show
\begin{equation} \label{eq:longproof1}
\|(\cR_\theta (\phi f))^{(|\gamma|)} - (\cR_{\theta'} (\phi f))^{(|\gamma|)}\|_{L^2(\R)} \leq C_2 |\tau_\theta - \tau_{\theta'}|^\alpha \quad \mbox{for some } 0< \alpha < 1/2.
\end{equation}
Using the Plancherel identity \cite{Mall2009SignalProcessing} and Theorem~\ref{theo:projectionslice}, we obtain
\begin{eqnarray}\nonumber
\lefteqn{\frac{1}{|\tau_\theta -\tau_{\theta'} |^\alpha} \left\|  \left((\cR_\theta (\phi f))^{(|\gamma|)} - (\cR_{\theta'} (\phi f))^{(|\gamma|)}\right) \right\|_{L^2(\R)}}\\
&=& \frac{1}{2\pi}\left\|\frac{(i \, \cdot)^{|\gamma|}}{|\tau_\theta -\tau_{\theta'} |^\alpha}
 \left(\mathcal{F}(\phi f)(\cdot \, \tau_{\theta})-\mathcal{F}(\phi f)(\cdot \, \tau_{\theta'}) \right)\right\|_{L^2(\R)}\nonumber\\
&\leq& \frac{1}{2\pi}\left\|(i \, \cdot)^{|\gamma|+\alpha} \frac{\mathcal{F}(\phi f)(\cdot \, \tau_{\theta})
-\mathcal{F}(\phi f)(\cdot \,  \tau_{\theta'})}{|\cdot \,  \tau_\theta - \cdot \,  \tau_{\theta'} |^\alpha} \right\|_{L^2(\R)}. \label{eq:HoelderEstimateA}
\end{eqnarray}
For $|\tau_\theta - \tau_{\theta'}|$ small enough, we now pick any multiindex $\rho$ with $|\rho|= |\gamma|$ and $|\tau_\theta^\rho|, |\tau_{\theta'}^\rho|\geq \left(\frac{1}{2}\right)^{|\gamma|}$. Thus, by  \eqref{eq:HoelderEstimateA},
\begin{eqnarray}\nonumber
\lefteqn{\frac{1}{|\tau_\theta -\tau_{\theta'} |^\alpha} \left\|  \left((\cR_\theta (\phi f))^{(|\gamma|)} - (\cR_{\theta'} (\phi f))^{(|\gamma|)}\right) \right\|_{L^2(\R)}}\\
& \le & \frac{1}{2\pi}\left\|(i\, \cdot )^{\alpha} \frac{\tau_\theta^{-\rho}\mathcal{F}(D^\rho(\phi f))(\cdot \,  \tau_{\theta})- \tau_{\theta'}^{-\rho}
\mathcal{F}(D^\rho (\phi f))(\cdot \, \tau_{\theta'})}{|\cdot \, \tau_\theta - \cdot \, \tau_{\theta'} |^\alpha} \right\|_{L^2(\R)}\nonumber\\
& \leq &\frac{1}{2\pi}\left\|(i\, \cdot)^{\alpha} \tau_\theta^{-\rho} \frac{\mathcal{F}(D^\rho(\phi f))(\cdot \, \tau_{\theta})
- \mathcal{F}(D^\rho (\phi f))(\cdot \, \tau_{\theta'})}{|\cdot \, \tau_\theta - \cdot \, \tau_{\theta'} |^\alpha} \right\|_{L^2(\R)} \nonumber\\ 
& &\qquad + \frac{1}{2\pi}\left\|(i \, \cdot)^{\alpha} \frac{\tau_\theta^{-\rho}\mathcal{F}(D^\rho(\phi f))(\cdot \, \tau_{\theta'})
- \tau_{\theta'}^{-\rho}\mathcal{F}(D^\rho (\phi f))(\cdot \, \tau_{\theta'})}{|\cdot \, \tau_\theta - \cdot \, \tau_{\theta'} |^\alpha}
\right\|_{L^2(\R)}\nonumber\\
&=:& {\mathcal M}_1+{\mathcal M}_2\label{eq:theSecondPartForHoelder}.
\end{eqnarray}
Since $\phi f$ has compact support, also $D^\rho(\phi f)$ is compactly supported. Hence, its Fourier transform is H\"older continuous and obeys
\[
\frac{\mathcal{F}(D^\rho(\phi f))(r \tau_{\theta})- \mathcal{F}(D^\rho (\phi f))(r \tau_{\theta'})}{|r \tau_\theta - r \tau_{\theta'} |^\alpha} < h(r \tau_\theta),
\quad\mbox{for all } r \in \R, \, \theta \in [0,\pi),
\]
for a function $h\in L^2(\R^2)$. Thus, the first term $\mathcal M_1$ in \eqref{eq:theSecondPartForHoelder} is bounded, if $\alpha < \frac{1}{2}$. To estimate the second term $\mathcal M_2$ in \eqref{eq:theSecondPartForHoelder}, we
observe, that
\[
\frac{1}{\tau_\theta^\rho} - \frac{1}{\tau_{\theta'}^\rho} = \frac{\tau_\theta^\rho-\tau_{\theta'}^\rho}{\tau_\theta^\rho \tau_{\theta'}^\rho} \leq C_3 |\tau_\theta - \tau_{\theta'}|.
\]
Thus, the term $\mathcal M_2$ is bounded by $C_4 | \tau_\theta - \tau_{\theta'}|^{1-\alpha}$, and we have proved \eqref{eq:longproof1}. Using the estimates for the two terms in \eqref{eq:twoTypesOfHoelderCont} yields that
\[
\|D^\gamma (\cR_\theta (\phi f))(\left \langle \cdot, \tau_\theta \right \rangle)
- D^\gamma (\cR_{\theta'} (\phi f))(\left \langle \cdot, \tau_{\theta'} \right \rangle) \|_{H^s(x_0)}
< C_5 |\tau_\theta - \tau_{\theta'}|^\alpha.
\]
Returning to the sequence in \eqref{eq:Cauchy2}, for $m>n$, we have the estimate
{\allowdisplaybreaks
\begin{align}\nonumber
&\|\int \limits_{\frac{\pi}{2}-\epsilon_n}^{\frac{\pi}{2}-\epsilon_m} \frac{1}{\cos(\theta)} (\cR_\theta (\phi f))(\left \langle \cdot, \tau_\theta \right \rangle) d\theta
+ \int \limits_{\frac{\pi}{2}+\epsilon_m}^{\frac{\pi}{2}+\epsilon_n} \frac{1}{\cos(\theta)} (\cR_\theta (\phi f))(\left \langle \cdot, \tau_{\theta} \right \rangle) d\theta\|_{H^s(x_0)}\\
\nonumber
=& \|\int \limits_{\frac{\pi}{2}-\epsilon_n}^{\frac{\pi}{2}-\epsilon_m} \frac{1}{\cos(\theta)} (\cR_\theta (\phi f))(\left \langle \cdot, \tau_{\theta} \right \rangle) d\theta
- \int \limits_{\frac{\pi}{2}-\epsilon_n}^{\frac{\pi}{2}-\epsilon_m} \frac{1}{\cos(\theta)} (\cR_{\pi-\theta}(\phi f))(\left \langle \cdot, \tau_{\pi-\theta} \right \rangle) d\theta\|_{H^s(x_0)}\\ \nonumber
=& \int \limits_{\frac{\pi}{2}-\epsilon_n}^{\frac{\pi}{2}-\epsilon_m} \frac{1}{\cos(\theta)} \left\|(\cR_\theta (\phi f))(\left \langle \cdot, \tau_{\theta} \right \rangle)
- (\cR_{\pi-\theta}(\phi f))(\left \langle \cdot, \tau_{\pi-\theta} \right \rangle)\right\|_{H^s(x_0)} d\theta \\
\leq& \ C_5 \int \limits_{\epsilon_m}^{\epsilon_n} \frac{|\tau_\theta - \tau_{\pi-\theta}|^{\alpha}}{\cos(\theta)}  d\theta\leq C_6 \int \limits_{\epsilon_m}^{\epsilon_n} \frac{|\pi -2\theta|^{\alpha}}{\cos(\theta)}  d\theta. \label{eq:thisconvergesTozero}
\end{align}
}
Since $\int_{0}^{\pi} |\pi-2\theta|^{\alpha}/\cos(\theta)\, d\theta < \infty$, \eqref{eq:thisconvergesTozero} converges to $0$ for $m>n$ and as $n \to \infty$ and hence
\eqref{eq:theCauchySequence} is a Cauchy sequence in $H^s(x_0)$, which implies the required regularity of $g_\xi$. Thus, $a(\cdot,D)(f)$ is
$s-$times weakly differentiable and using \eqref{eq:aInu}, we obtain the required differentiability of $\mathcal{I}_1$.

\emph{Regularity of $\mathcal{I}_{2,1}$}: The proof of  local regularity of $\mathcal{I}_{1}$ can be applied in a similar way to also prove local regularity
of $\mathcal{I}_{2,1}$.

\emph{Regularity of $\mathcal{I}_{2,2}$}: Using the same argument as for $\mathcal{I}_{1}(x)$, we obtain
\begin{align*}
 \mathcal{I}_{2,2}(x) &= \intRtwo \intRtwo \frac{\mathcal{F}((1-\phi)f)(\xi) \mathcal{F}((1-\phi) f)(\eta)}{\left \langle \eta, \xi \right \rangle - i0}
e^{ i \left \langle x, \xi + \eta \right \rangle} \,d\eta \,d\xi\\
 &=\int \limits_{|\xi|\geq 1}\intRtwo \frac{\mathcal{F}((1-\phi)f)(\xi) \mathcal{F}((1-\phi) f)(\eta)}{\left \langle \eta, \xi \right \rangle - i0}
e^{ i \left \langle x, \xi + \eta \right \rangle} \,d\eta \,d\xi +\nu_2(x),
\end{align*}
for  $x \in U_{x_0}$, where $\nu_2 \in C^\infty(\R^2)$. In this case the approach as for $\mathcal{I}_{1}(x)$ is not applicable anymore, since $(1-\phi) f$ is not globally $s$-times differentiable and, consequently, its Radon transform does not need to be as well. Thus, it is not
possible to construct a pseudo-differential operator, which is bounded from $H^s(x_0)$ to $H^s(x_0)$.

However, since $(1-\phi) f \in L^2(\R^2)$, using the argument of \eqref{eq:thisconvergesTozero} with $s = 0$ and considering the Radon
transform $\cR_\theta((1-\phi)f)$ instead of $\cR_\theta(\phi f)$ yields that with the symbol $b$ being defined as
\[
 b(x, \xi): = \intRtwo \frac{\mathcal{F}((1-\phi) f)(\eta)}{\left \langle \eta, \xi \right \rangle - i0} e^{i \left \langle x, \eta \right \rangle}  d\eta,
\quad (x ,\xi) \in \R^2 \times \R^2,
\]
the function
\[
h_\xi : U_{x_0} \to \C, \quad h_\xi(x) := b(x, \xi), \quad \xi \in \R^2,
\]
is an $L^2(\R^2)$ function with $\|h_\xi\|_{L^2(\R^2)}$ independent of $\xi$.

Approximating $b$ via a sequence
\[
 b^M(x, \xi): = \int \limits_{|\eta| \leq M} \frac{\mathcal{F}((1-\phi) f)(\eta)}{\left \langle \eta, \xi \right \rangle - i0}
e^{i \left \langle x, \eta \right \rangle}  d\eta, \quad \text{for } M\in \N,
\]
we have that for fixed $\xi$, the function $x \mapsto b^M(x, \xi)$ is $C^{\infty}$ on a neighborhood of $x_0$. Hence, $b^M(x, D)$ is a
bounded operator from $H^t(x_0)$ to $H^t(x_0)$ for all $t\geq 0$. In particular, since $(1-\phi)f \equiv 0$ on a neighborhood of $x_0$, we obtain
\[
b^M(x, D)((1-\phi)f) \equiv 0 \quad \mbox{on a neighborhood of }x_0,
\]
which can be chosen to be the same for all $M$. Then
\[
\|b^M(\cdot, \xi) - b(\cdot, \xi)\|_{L^2(\R^2)} \to 0 \quad \mbox{as } M\to \infty \mbox{ uniformly in }\xi,
\]
and hence
\[
\|b^M(x, D)((1-\phi)f) - b(x, D)((1-\phi)f)\|_{L^2(\R^2)} \to 0 \quad \mbox{as }M\to \infty.
\]
In particular, since $b^M(x, D)((1-\phi)f) = 0$ on a neighborhood of $x_0$, also $b(x, D)((1-\phi)f) = 0$ on a neighborhood
of $x_0$. Since $\mathcal{I}_2$ equals $b(x, D)((1-\phi)f)$ up to a smooth function, this yields the claimed regularity of $\mathcal{I}_{2,2}$.

Combining all the terms $\mathcal{I}_{1}$, $\mathcal{I}_{2,1}$, and $\mathcal{I}_{2,2}$ finishes the proof that $q_1 \in H^s(x_0)$.
For the functions  $q_2, \ldots, q_{m}$, using a similar computation as in the proof
of the main theorem of~\cite{Serov2013}, we obtain for  $1 \le j \le m-1$ that
\[
 q_{j+1}(x) = \frac{1}{4\pi^2} \intRtwo \intRtwo \intRtwo e^{i\left \langle \xi, x + \frac{y}{2} + \frac{z}{2}\right \rangle} f(y) \mathcal{G}_{|\xi|}(y,z) q_j(z)\, dz\, dy \, d\xi,
\quad x \in \R^2.
\]
Now we can apply similar arguments as in the proof for $q_1 \in H^s(x_0)$, in particular, splitting $f$ and $q_j$ into two parts and estimating the resulting terms in the same fashion as before.

Finally, to show that $f_B \in H^s(x_0)$, the decomposition \eqref{eq:iteration} indicates that it remains to analyze the regularity
of  $q_{m+1}^R$. It has been shown in \cite[Prop. 4.1]{OlaPS2001}, that $q_{m+1}^R\in H^t(\R^2)$ for all $t<(m+1/2)/2-1$.
Hence choosing $m$ large enough yields the final claim.
\end{proof}

\begin{remark}{\rm
Observe that in Theorem \eqref{thm:MainSchroedinger} we locally lose an $\epsilon$ in the derivative for arbitrarily small $\epsilon>0$,
when going from $f\in L^2(\R^2) \cap H^{s+\epsilon}(x_0)$ to $f_B \in H^s(x_0)$. Certainly, one might ask whether this is in fact necessary.
The examination of $\mathcal{I}_{2,2}$ in the proof of Theorem \eqref{thm:MainSchroedinger} suggests that the regularity of $f_B$ depends only on the term
\[
\intRtwo \intRtwo \intRtwo e^{i\left \langle \xi, x + \frac{y}{2} + \frac{z}{2}\right \rangle} \phi(y) f(y) \mathcal{G}_{|\xi|}(y,z) \phi(z)f(z) \,dz \,dy\, d\xi.
\]
A careful review of the methods of \cite{OlaPS2001} and \cite{Serov2013} seems to indicate that this term should be even smoother
than $\phi(y) f(y)$. Hence we believe that Theorem \ref{thm:MainSchroedinger} can be improved in the sense that locally the regularity
of the Born approximation $f_B$ is higher than the regularity of the contrast function $f$.}
\end{remark}

We now turn to the question of how the Born approximation affects the regularity of a function $f$
that is modeled as a cartoon-like
function. It would certainly be highly desirable that $f_B$ is again a cartoon-like function, and we show next that this is indeed almost the case when posing some weak additional conditions to  $f$.

The proof will use both the known results that the inverse Born approximation does not introduce a global smoothing, see \cite{OlaPS2001,ReyGlobalBorn2007,Serov2013}, as well as Theorem~\ref{thm:MainSchroedinger}, which proves that locally the smoothness does not decrease.
The key point will be that for a scatterer, which is smooth except for some singularity curve, this curve will still be present in the inverse Born approximation. For this result, we introduce the notion of a neighborhood $N_\delta(X)$ of a subset $X \subset \R^2$ defined by $N_\delta(X): = \{x\in \R^2: \inf_{y\in X}\|x-y\|_2 <\delta\}$, where $\delta > 0$.
\begin{corollary}
\label{cor:cartoon}
Let $\epsilon>0$, let $f_0, f_1 \in H^{3+\epsilon}(\R^2)$ be compactly supported, let $D$ be some compact domain with piecewise $C^2$ boundary $\partial D$, and set
\[
f = f_0 +  f_1 \chi_D.
\]
Then, for every $\delta>0$, there exist $\tilde{f}_0^\delta, \tilde{f}_1^\delta \in H^3(\R^2)$ with compact support, $h^\delta \in H^r(\R^2)$
for every $r<\frac{1}{2}$ with $\suppp h^\delta \subset N_\delta(\partial D)$, and $\nu^\delta\in C^\infty(\R^2)$ such that the inverse Born
approximation $f_B$ of $f$ can be written as
\[
f_B = \tilde{f}_0^\delta + \tilde{f}_1^\delta \chi_D + h^\delta + \nu^\delta.
\]
In particular, $f_B$ is a cartoon-like function up to a $C^\infty$ function and an arbitrarily well localized correction term at the boundary.
\end{corollary}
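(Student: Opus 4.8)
The plan is to feed Theorem~\ref{thm:MainSchroedinger} (with $s=3$) together with the known global results on the inverse Born approximation into a decomposition of $f_B$ by two cut-off functions. Fix $\delta>0$ and choose $\psi^\delta\in C_c^\infty(\R^2)$ with $\psi^\delta\equiv 1$ on a neighbourhood of $\partial D$ and $\suppp\psi^\delta\subset N_\delta(\partial D)$ (possible since $\partial D$ is compact), and $\eta\in C_c^\infty(\R^2)$ with $\eta\equiv 1$ on an open ball containing $\suppp f_0\cup\overline D\cup\overline{N_\delta(\partial D)}$; in particular $\eta\equiv 1$ on a neighbourhood of $\suppp f$, so that $\eta f=f$.

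First I would record the two regularity inputs. For any $x_0\notin\partial D$ the function $f=f_0+f_1\chi_D$ agrees near $x_0$ with $f_0$ or with $f_0+f_1$, hence $f\in H^{3+\epsilon}(x_0)$, and Theorem~\ref{thm:MainSchroedinger} (applied to the real-valued potential $f$, with $s=3$) gives $f_B\in H^3(x_0)$; applying the theorem for all $s$ at points $x_0\notin\suppp f$, where $f$ vanishes locally, and using that $f_B$ itself does not depend on the truncation index $m$ in \eqref{eq:iteration}, we also obtain that $f_B\in C^\infty$ off $\suppp f$. On the other hand, by \cite{OlaPS2001,ReyGlobalBorn2007,Serov2013} the inverse Born approximation does not introduce a global smoothing and $f_B-f$ is more regular than $f$; in particular $f_B$ — and hence $f_B-f$ — lies in $H^r_{\mathrm{loc}}(\R^2)$ for every $r<\tfrac12$ in a neighbourhood of $\partial D$ (the same local regularity as $f$ there), whereas away from $\partial D$ one has $f_B-f\in H^3_{\mathrm{loc}}(\R^2)$ by the previous sentence.

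With these in hand I would simply set
\[
\tilde f_0^\delta:=f_0+(1-\psi^\delta)\,\eta\,(f_B-f),\quad
\tilde f_1^\delta:=f_1,\quad
h^\delta:=\psi^\delta\,\eta\,(f_B-f),\quad
\nu^\delta:=(1-\eta)\,f_B,
\]
and verify the claims, which is now a routine matter of cut-off arguments. Since $\eta f=f$ we get $\tilde f_0^\delta+\tilde f_1^\delta\chi_D+h^\delta+\nu^\delta=f+\eta(f_B-f)+(1-\eta)f_B=f_B$. Moreover $\tilde f_1^\delta=f_1\in H^{3+\epsilon}\subset H^3$ with compact support; $h^\delta$ is supported in $N_\delta(\partial D)$ by $\suppp\psi^\delta$, and since only the behaviour of $f_B-f$ near $\partial D$ enters it lies in $H^r(\R^2)$ for every $r<\tfrac12$; $(1-\psi^\delta)\eta(f_B-f)$ is compactly supported (by $\eta$), vanishes near $\partial D$ (by $1-\psi^\delta$) and is locally $H^3$ elsewhere, hence lies in $H^3(\R^2)$, so $\tilde f_0^\delta\in H^3(\R^2)$ with compact support; and $\nu^\delta=(1-\eta)f_B\in C^\infty(\R^2)$ because $1-\eta$ is supported where $f_B$ is smooth.

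I expect the real difficulty to lie not in this bookkeeping but in the global input: certifying that the inverse Born approximation neither smooths away nor worsens the jump of $f$ across $\partial D$, so that $f_B-f$ genuinely stays in $H^r_{\mathrm{loc}}$ for $r<\tfrac12$ near $\partial D$. This is exactly what is imported from \cite{OlaPS2001,ReyGlobalBorn2007,Serov2013}, while Theorem~\ref{thm:MainSchroedinger} supplies the complementary fact that the $H^3$-regularity of $f$ away from $\partial D$ is transported to $f_B$ — and it is this that makes the ``good'' part $\tilde f_0^\delta+\tilde f_1^\delta\chi_D$ honestly cartoon-like, with the loss of an $\epsilon$ in smoothness traceable to the $\epsilon$-loss already present in Theorem~\ref{thm:MainSchroedinger}.
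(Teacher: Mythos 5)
Your proposal is correct and follows essentially the same route as the paper: a boundary/bulk/far-field partition of unity applied to the correction $f_B - f$ (the paper's $q_1^R$), with Theorem~\ref{thm:MainSchroedinger} supplying the local $H^3$ (resp.\ $C^\infty$) regularity off $\partial D$ (resp.\ off $\suppp f$) and \cite{OlaPS2001} supplying the global $H^{r}$, $r<\tfrac12$, regularity of the correction. The paper's $\phi_1,\phi_2,\phi_3$ are exactly your $\psi^\delta$, $(1-\psi^\delta)\eta$, $1-\eta$, and its definitions of $\tilde f_0^\delta$, $\tilde f_1^\delta$, $h^\delta$, $\nu^\delta$ coincide with yours.
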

\begin{proof}
Let $f_0, f_1,B$ be as assumed. For a fixed $\delta > 0$, choose $\phi_1,\phi_2, \phi_3 \in C^\infty(\R^2)$ such
that $\phi_1 + \phi_2 + \phi_3 \equiv 1$, $\phi_i \geq 0$ for $i = 1,2,3$, and
\begin{eqnarray*}
 \phi_1 &\equiv& 1 \text{ on } N_\frac{\delta}{2}(\partial D), \quad \suppp \phi_1 \subset  N_\delta(\partial D),\\
 \phi_2 &\equiv& 1 \text{ on } (\suppp f_0 \cup \suppp f_1) \setminus N_\delta(\partial D),\\
 \suppp \phi_2 &\subset&  N_\delta(\suppp f_0
\cup \suppp f_1) \setminus N_\frac{\delta}{2}(\partial D).
\end{eqnarray*}
By Theorem \ref{thm:MainSchroedinger}, it follows that $\phi_2 f_B \in H^3(\R^2)$ and $\phi_3 f_B \in C^\infty(\R^2)$. Then \eqref{eq:iteration} implies that
\[
 \phi_i f_B = \phi_i f + \phi_i q_1^R, \quad \text{ for } i= 1,2,3,
\]
and thus $\phi_2 q_1^R \in H^3(\R^2)$ and $\phi_3 q_1^R \in C^\infty(\R^2)$.

Defining
\[
 \tilde{f}_0^\delta := f_0 + \phi_2 q_1^R,\quad \tilde{f}_1^\delta := f_1,\quad h^\delta := \phi_1 q_1^R,\quad \mbox{and} \quad \nu^\delta := \phi_3 q_1^R,
\]
then the Sobolev embedding theorem~\cite{Adams1975}, implies that $\tilde{f}_0^\delta, \tilde{f}_1^\delta\in C^2(\R^2)$. Then by \cite[Prop. 4.1]{OlaPS2001} it follows that
$q_1^R \in H^r(\R^2)$ for all $r<\frac{1}{2}$, and hence $h^\delta \in H^r(\R^2)$, and $\suppp h^\delta \subset N_\delta(\partial B)$ follows
by construction. The function $\nu^\delta$ is $C^\infty$, since $\phi_3 q_1^R \in C^\infty(\R^2)$. Thus the main assertion is proved and the 'in particular' part follows immediately.
\end{proof}

\subsection{Numerical Examples}\label{subsec:Schr_Num}

The previously derived results, in particular, Corollary \ref{cor:cartoon} now enable the utilization of common numerical approaches for the linearized problem.
In fact, Corollary \ref{cor:cartoon} implies that the inverse Born approximation of a cartoon-like function will indeed be almost a cartoon-like function. It
thus seems conceivable to assume that the inverse Born approximation can again be sparsely approximated by shearlets, i.e., the sparsity  of the expansion -- which
is key to most regularization approaches -- is maintained during the linearization process. To illustrate why this is indeed the case, let us consider the following
numerical example.

\subsubsection{Sparse Approximation of the Inverse Born Approximation by Shearlets}

For this, we consider the cartoon-like function $f$ given in the top left of Figure \ref{fig:BornExp1}.
Denoting by $ [l;m]$ the vector with entries $l, m \in \Z$, we compute the discrete backscattering amplitude of $f$ from full scattering data:
$$
 A(l,m) = \int_{\R^2} e^{ i \cdot ( [l;m],  y) } f(y) u_{l,m}(y)\, dy \quad \mbox{for all } l, m \in [-1,1]\cap \frac{1}{128}\Z,
$$
where $u_{l,m}$ is the solution of \eqref{eq:SchrEq} with incident wave $u^{inc}(y) = e^{ i \cdot ( [l;m]/\|[l;m]\|,  y) }$ and wavenumber $k = \|[l;m]\|$. By taking the discrete inverse Fourier transform of $A$, we obtain the inverse Born approximation $f_B$ depicted in the top right of Figure \ref{fig:BornExp1}.
It is immediately visually evident, that the reconstruction exhibits the same singularity curve with a smooth distortion, as predicted by Corollary \ref{cor:cartoon}.
\begin{figure}[htb!]
\centering
 \includegraphics[width = 0.32\textwidth]{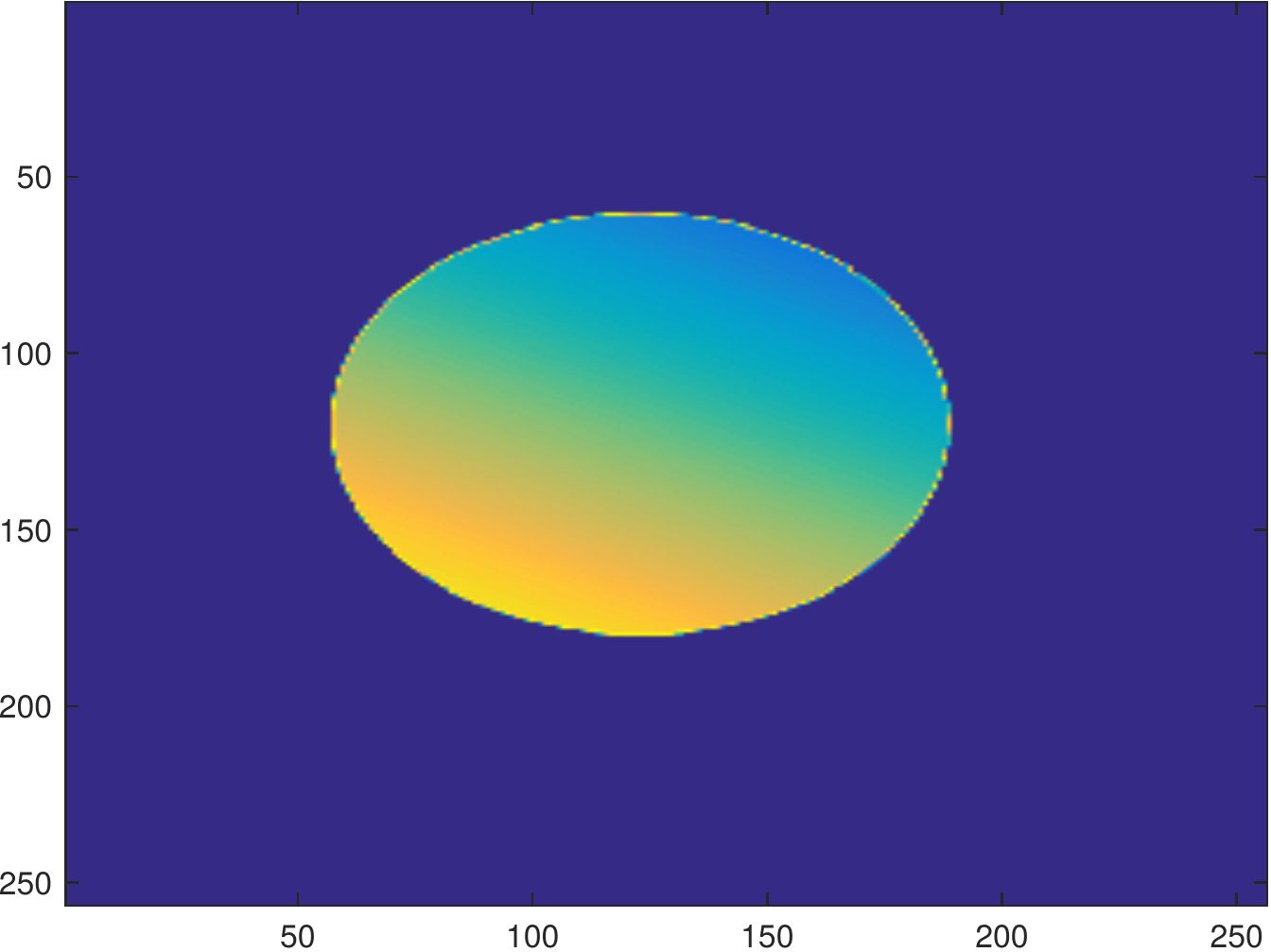} \includegraphics[width = 0.32\textwidth]{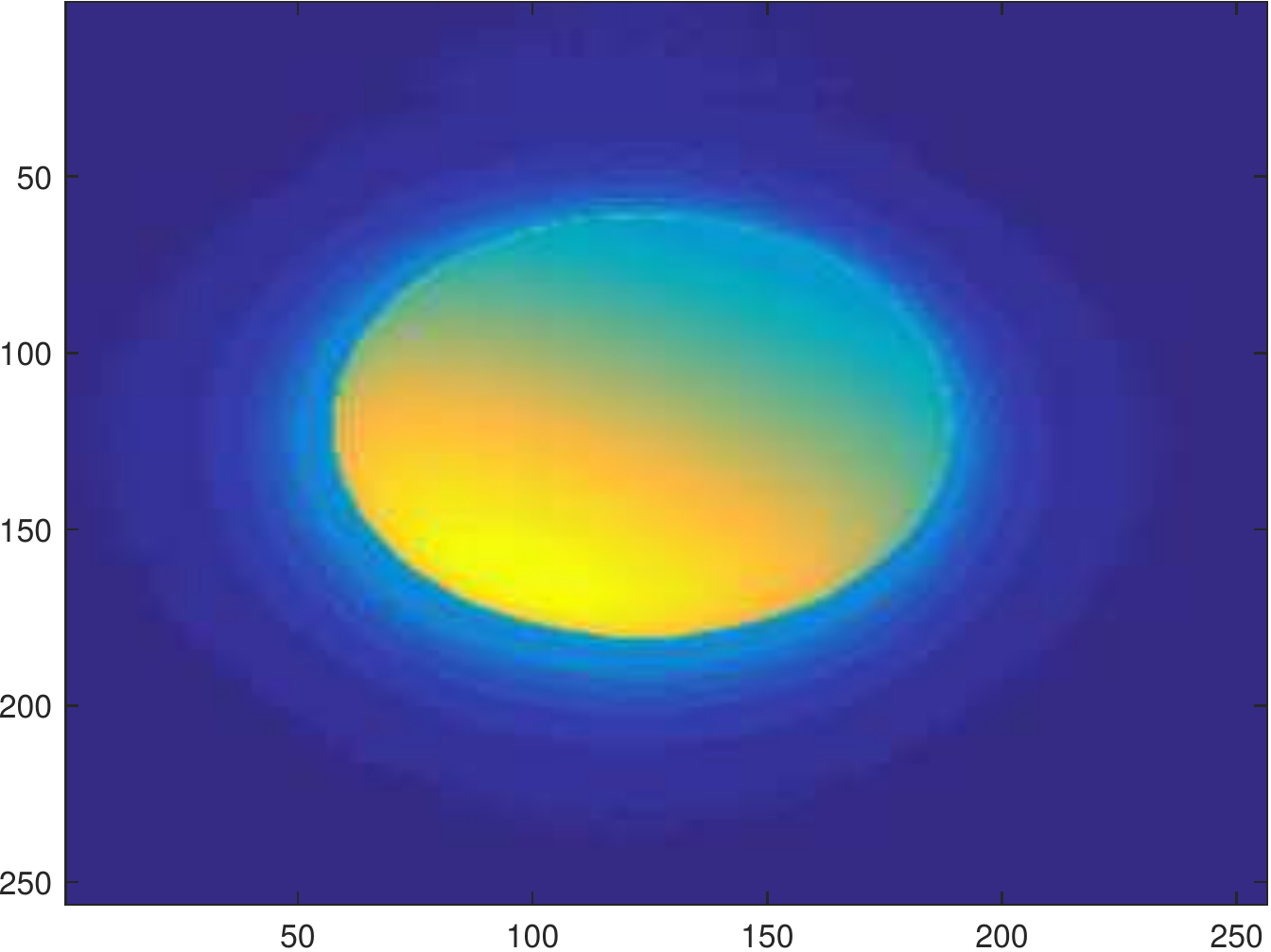}   \includegraphics[width = 0.31\textwidth]{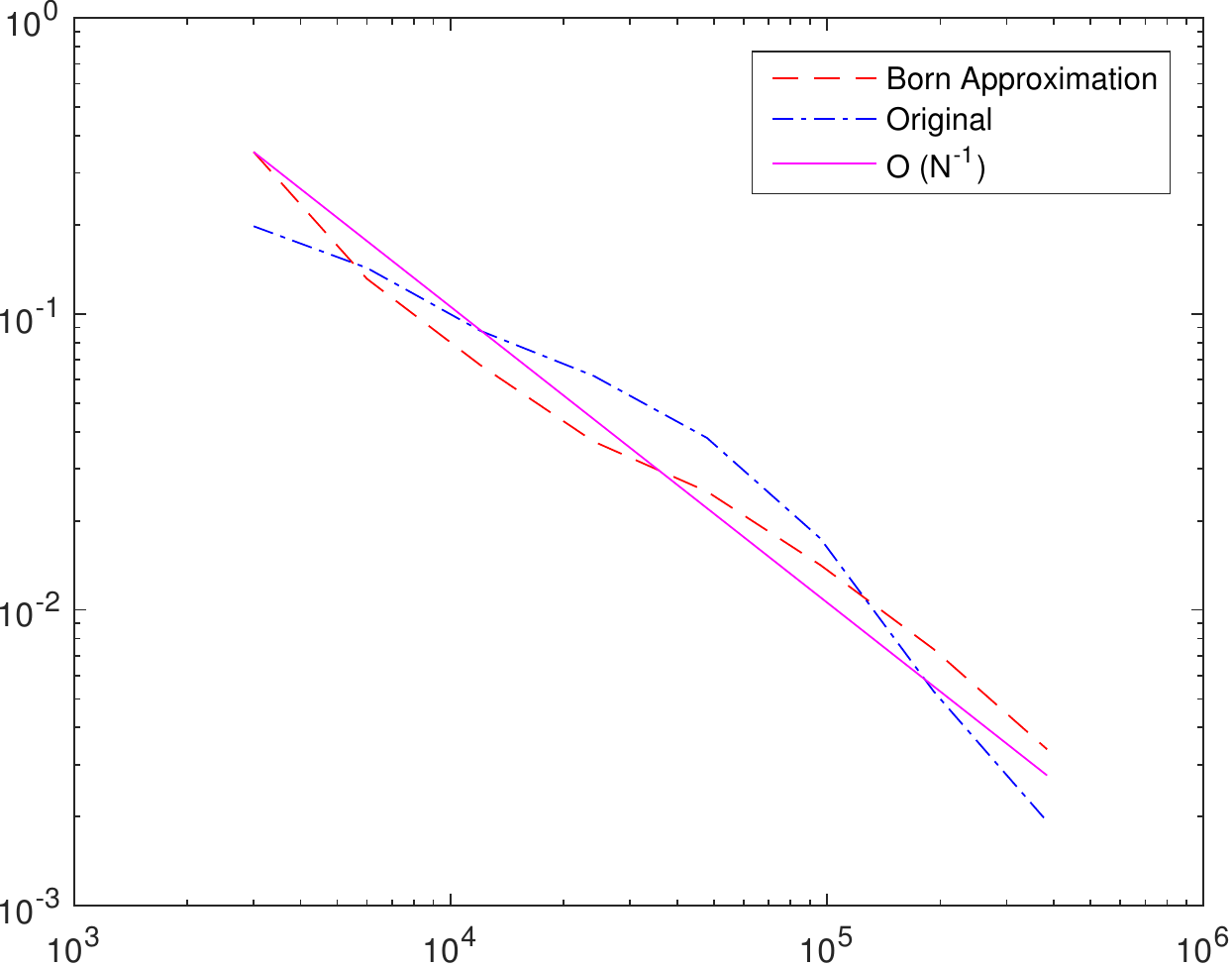}

 \medskip
 \includegraphics[width = 0.24\textwidth]{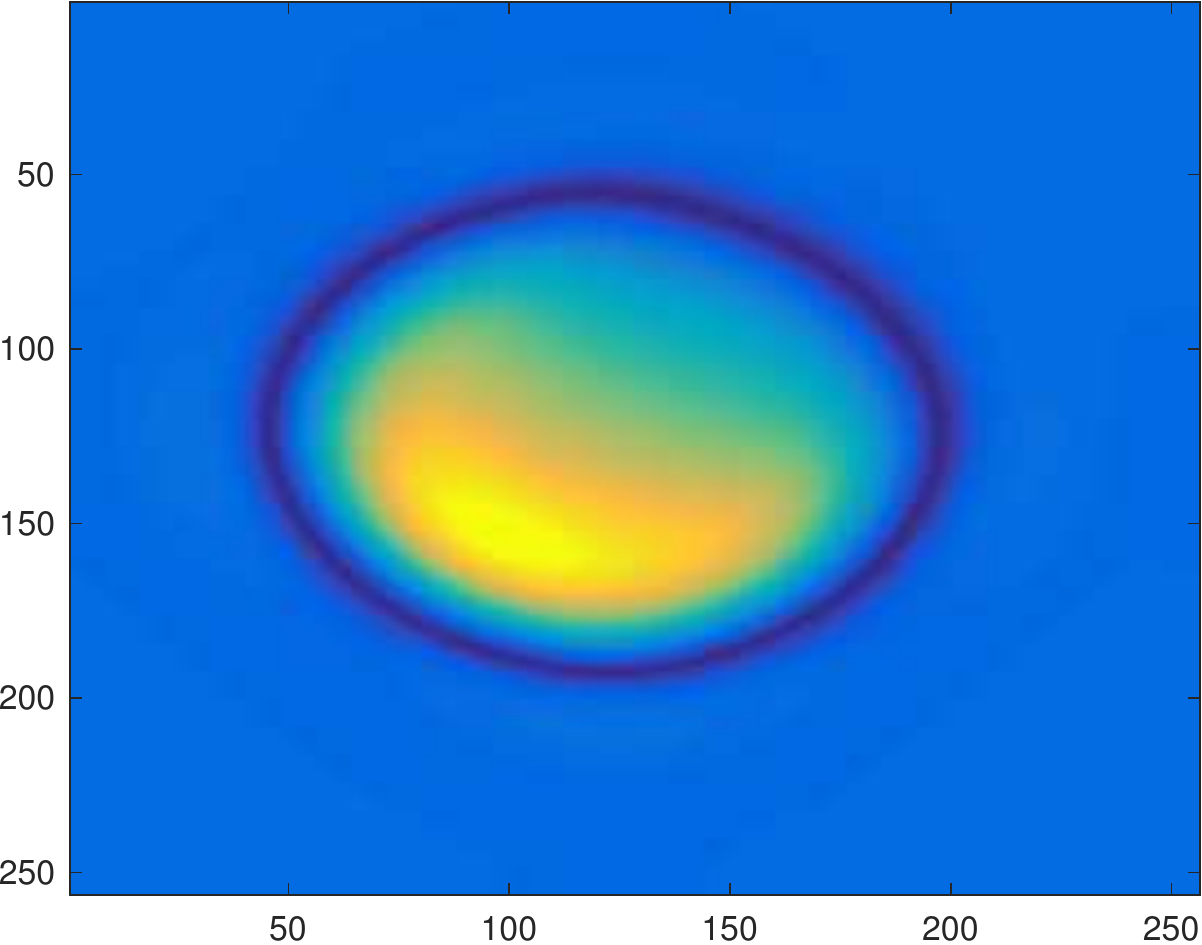} \includegraphics[width = 0.24\textwidth]{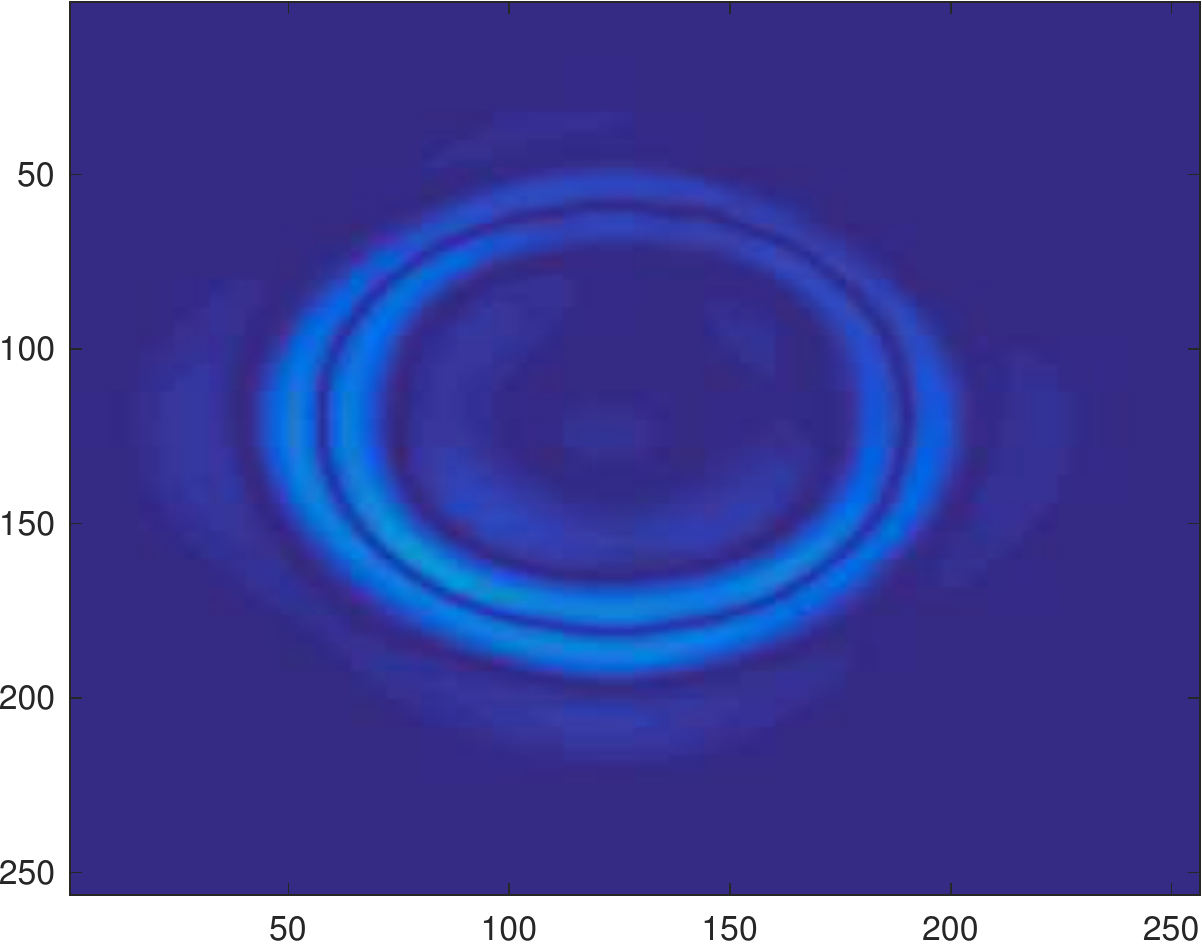} \includegraphics[width = 0.24\textwidth]{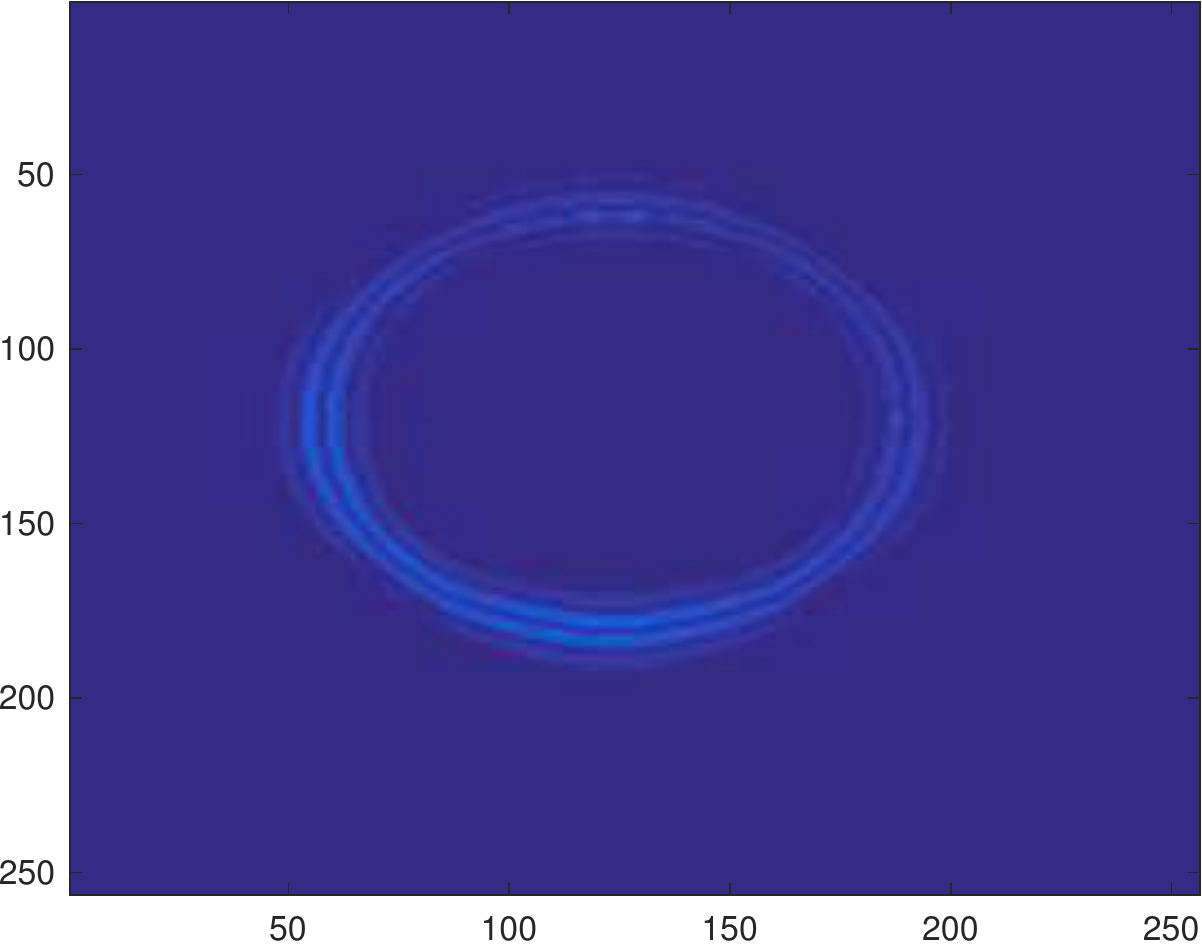} \includegraphics[width = 0.24\textwidth]{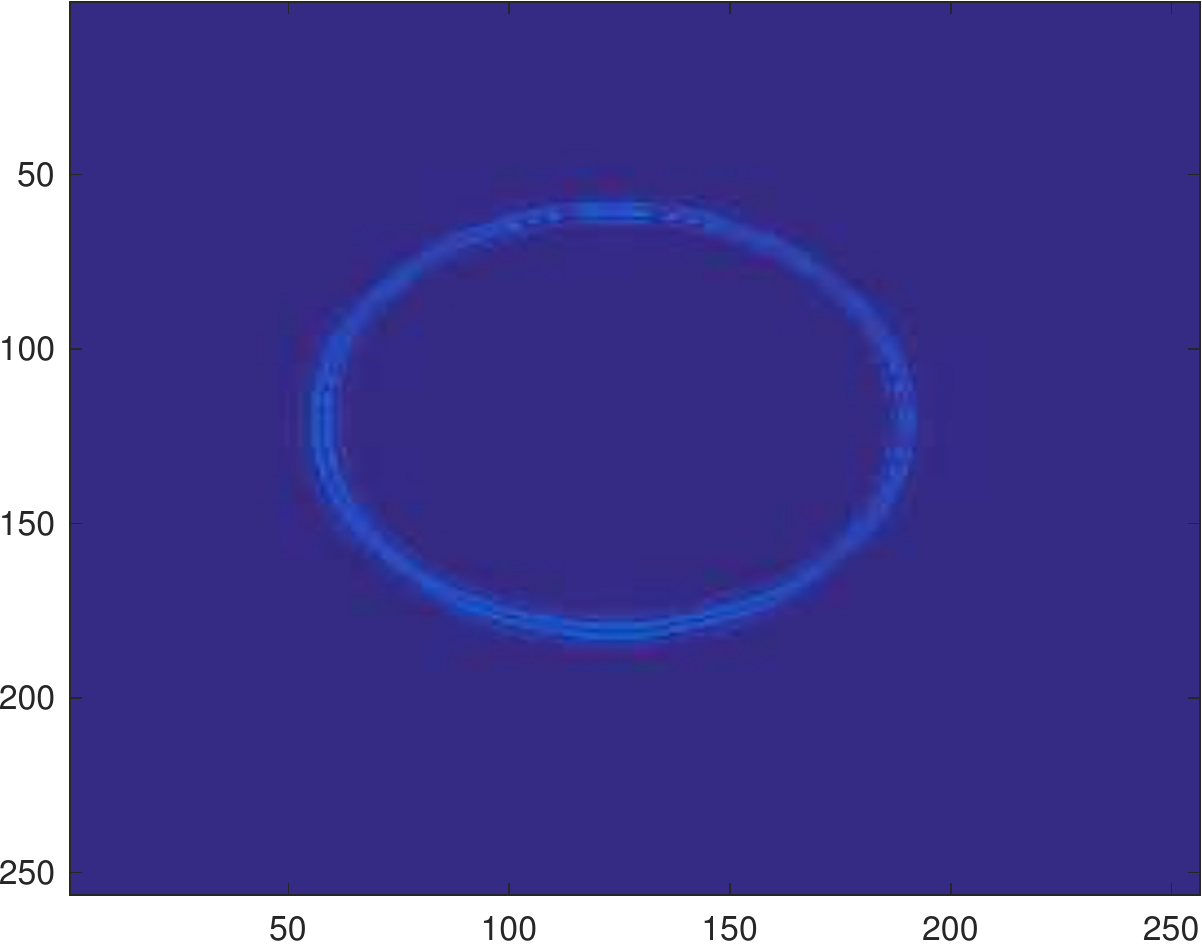}
 \caption{\textbf{Top (from left to right):} \hl{A cartoon-like function $f$, the corresponding inverse Born approximation $f_B$, $N$-term approximation error with the shearlet frame for the original $f$ and its Born approximation $f_B$. The decay is asymptotically of order $O(N^{-1})$.}\\
  \textbf{Bottom (from left to right):} Absolute values of the reconstruction of the inverse Born approximation $f_B$ using only shearlet coefficients on levels $J = 0,1,2,3$. The energy of the higher level shearlet coefficients is clearly concentrated along the jump singularity of the original cartoon-like function.}\label{fig:BornExp1}
 \end{figure}
To examine the sparsity structure of $f_B$, we now compute reconstructions of $f_B$ using only shearlet coefficients on a fixed level. The results are depicted
in Figure~\ref{fig:BornExp1} and show that the energy of the reconstruction using higher levels is concentrated along the discontinuity curve. This provides
us with the first (qualitative) indication that the inverse Born approximation $f_B$ can again be sparsely approximated by shearlets.

Aiming also for quantitative evidence of the ability of shearlets to sparsely approximate $f_B$, we next analyze the \hl{$N-$ term approximation rate by the shearlet system of $f$ and $f_B$} in \hl{the top right of Figure \ref{fig:BornExp1}}.
We observe that both expansions have the same order of decay \hl{of} $O(N^{-1})$ as $N\to \infty$. Thus, the rate of approximation of the
original cartoon-like function and its inverse Born approximation coincide asymptotically with the \hl{optimal rate for cartoon-like functions}. Hence, concluding,
our theoretical analysis (Corollary \ref{cor:cartoon}) that the sparsity structure of cartoon-like functions in the shearlet expansion prevails after
taking the inverse Born approximation becomes also evident in numerical experiments.


\subsubsection{Solution of the Linearized Problem}

Having settled the question of sparse approximation of the inverse Born approximation by shearlets both theoretically and numerically allows us to then
utilize one of the numerous approaches to incorporate sparse regularization in linear inverse problems, see \cite{CandD2002CurveletsInIllPosedProblems,ColEGL2010ShearletsRadon,PEHShearDoconv2009} as well as \cite{KL2012}.

Let us take a closer look at the linearized problem that we face in the case of the inverse scattering problem of the Schr\"{o}dinger equation.
For a given backscattering amplitude $A$, the inverse Born approximation can be obtained by simply taking the inverse Fourier transform of $A$ as described
in \eqref{eq:DefOfFB}. In real world applications the inversion will be more involved, since the whole backscattering amplitude might not be accessible and only
partial measurements can be used. Furthermore, these measurements are likely to be corrupted by noise. At this point the problem becomes a problem of reconstructing
functions exhibiting a known sparsity structure from given partial noisy Fourier measurements.

Now assume that samples of the backscattering amplitude $(A(k_i, \theta_i))_{i = 1}^n =: \mathbf{A}\in \R^n$ are given. Using the synthesis operator $T_{\tilde{\Phi}}$
of the dual of a shearlet frame and the notation of Subsection \ref{sec:TheInverseProblem}, we can then define
%
\[
K: \R^m \to \R^n, \quad c \mapsto K c := \left(\int_{\R^2} e^{i k_i \left \langle \tau_{\theta_i}, y \right \rangle} \left(T_{\tilde{\Phi}}(c)\right)(y)  dy \right)_{i = 1}^n.
\]
Since, due to the limited amount of measurements, this problem is typically underdetermined, the sparsity introduced by the shearlet expansion and guaranteed by
Corollary \ref{cor:cartoon} will be used, and the inversion of $K$ will be casted as the regularized minimization problem
\[
 \argmin_{c\in \R^m} \|K c - \mathbf{A}\|_{\R^n} + \Xi(c),
\]
where $\Xi: \R^m \to \R$ is a suitable sparsity promoting functional. Traditionally, if $\Xi$ is chosen as $\lambda \|. \|_1$ for some $\lambda >0$, this
problem can be solved by an iterative shrinkage thresholding algorithm, \cite{DauDD}. Along these lines, reconstruction problems from Fourier data have
been studied under the premise of sparse representations in \cite{Adcock13breakingthe, 2DWaveletRec} for wavelets and with some numerical examples also
for shearlets in \cite{RomAH2014}. A full analysis of the reconstruction problem with shearlet systems as well as numerical examples is given in \cite{JackMa}.

\section*{Acknowledgements}

The first author acknowledges support by the Einstein Foundation Berlin, by the Einstein Center for Mathematics Berlin
(ECMath), by Deutsche Forschungsgemeinschaft (DFG) Grant KU 1446/14, by the DFG Collaborative Research Center TRR 109
``Discretization in Geometry and Dynamics'', and by the DFG Research Center {\sc Matheon} ``Mathematics for key technologies''
in Berlin. The second author also acknowledges support by {\sc Matheon}, and the third author thanks the DFG Collaborative Research Center TRR 109
``Discretization in Geometry and Dynamics'' for its support.

\small

\bibliographystyle{plain}
\bibliography{Ref_Scattering}

\end{document}